\numberwithin{equation}{section}
\theoremstyle{plain} %
\newtheorem{theorem}{\indent Theorem}[section] % 
\newtheorem{lemma}[theorem]{\indent Lemma}
\newtheorem{corollary}[theorem]{\indent Corollary}
\newtheorem{proposition}[theorem]{\indent Proposition}
\newtheorem{fact}[theorem]{\indent Fact}
\theoremstyle{definition} % 
\newtheorem{definition}[theorem]{\indent Definition}
\newtheorem{remark}[theorem]{\indent Remark}
\newtheorem{example}[theorem]{\indent Example}
\newcommand{\trace}{\mathop{\mathrm{trace}}\nolimits}
\def\tr#1{\mathord{\mathopen{{\vphantom{#1}}^t}#1}} 
\def\rank{\mathop{\mathrm{rank}}\nolimits}
\def\ord{\mathop{\mathrm{ord}}\nolimits}
\def\C{{\mathbf{C}}}%   \C == \mathbf{C}
\def\R{{\mathbf{R}}}%   \R == \mathbf{R}
\def\H{{\mathbf{H}}}%   \H == \mathbf{H}
\def\Z{{\mathbf{Z}}}%   \Z == \mathbf{Z}
\def\N{{\mathbf{N}}}%   \N == \mathbf{N}
\def\D{{\mathbf{D}}}%  \D == \mathbf{D}
\def\L{{\mathbf{L}}}%  \L == \mathbf{L}
\def\Pi{{\mathbf{P}}}%  \Pi == \mathbf{P}
\def\Si{{\mathbf{S}}}%   \Si == \mathbf{S}
\def\E{{\mathcal{E}}}%   \E == \mathcal{E}
\title[Lagrangian Gauss map]{Value distribution of the Gauss map of improper affine spheres}
\author[Y.~Kawakami]{Yu Kawakami}
\author[D.~Nakajo]{Daisuke Nakajo}
\subjclass[2000]{Primary 53A15 ; Secondary 30D35, 53A35, 53C42}
\keywords{improper affine sphere, Lagrangian Gauss map, complete, weakly complete,  
exceptional value, flat front, Bernstein type theorem, Liouville property}
\thanks{The first author was partially supported by the Grants-in-Aid for Young Scientists (B) No. 21740053, 
Japan Society for the Promotion of Science and Global COE program (Kyushu university) 
``Education and Research Hub for Mathematics-for-Industry''. }
\address{
Graduate School of Science and Engineering, \endgraf
Yamaguchi university, \endgraf
Yamaguchi, 753-8512, Japan}
\email{y-kwkami@yamaguchi-u.ac.jp}
\address{Faculty of Mathematics, \endgraf
Kyushu university, \endgraf
744, Motooka, Nishiku,
Fukuoka-city,
819-0395, Japan}
\email{nakajo@math.kyushu-u.ac.jp}
\begin{document}

\maketitle

\begin{abstract}
We give the best possible upper bound for the number of exceptional values of the Lagrangian Gauss map of complete 
improper affine fronts in the affine three-space. We also obtain the sharp estimate for weakly complete case. 
As an application of this result, we provide a new and simple proof of the parametric affine Bernstein problem 
for improper affine spheres. Moreover, we get the same estimate for the ratio of canonical forms of weakly complete 
flat fronts in hyperbolic three-space. 
\end{abstract}

\section*{Introduction}
The study of improper affine spheres has been related to various subjects in geometry and analysis. 
In fact, improper affine spheres in the affine three-space ${\R}^{3}$ are locally graphs of solutions of 
the Monge-Amp\`ere equation $\det{({\nabla}^{2} f)}=1$, and Calabi \cite{Ca2} proved that there exists a local correspondence 
between solutions of the equation of improper affine spheres in ${\R}^{3}$ and solutions of 
the equation of minimal surfaces in Euclidean three-space. Recently, Mart\'inez \cite{Ma1} discovered 
the correspondence between improper affine spheres and smooth special Lagrangian immersions in 
the complex two-space ${\C}^{2}$. Moreover, from the viewpoint of this correspondence, he introduced the notion 
of improper affine maps, that is, a class of (locally strongly convex) improper affine spheres with some admissible 
singularities and gave a holomorphic representation formula for them. Later, the second author \cite{Na},  
Umehara and Yamada \cite{UY2} showed that an improper affine map is a front in ${\R}^{3}$, and hence we 
call this class improper affine fronts in this paper. 
Mart\'inez \cite{Ma1} also defined the Lagrangian Gauss map of improper affine fronts in ${\R}^{3}$ 
and obtained the characterization of a complete (in the sense of \cite{KUY2, Ma1}, see also Section 1 of this paper) 
improper affine front whose Lagrangian Gauss map is constant. We note that the second author \cite{Na} 
constructed a representation formula for indefinite improper affine spheres with some admissible singularities. 

On the other hand, the study of value distribution property of the Gauss map of complete minimal surfaces in Euclidean 
three-space has accomplished many significant results. 
This study is a generalization of the classical Bernstein theorem \cite{Be} and initiated by Osserman \cite{Os3, Os1, Os2}. 
Fujimoto \cite{Fu1, Fu2, Fu3} showed that the best possible upper bound for the number $D_{g}$ of exceptional 
values of the Gauss map $g$ of complete nonflat minimal surfaces in Euclidean three-space is ``four''. 
Ros \cite{Ros} gave a different proof of this result. Moreover, Osserman \cite{Os1, Os2} proved that the Gauss map of 
a nonflat algebraic minimal surface 
can omit at most three values (by an algebraic minimal surface, we mean a complete minimal surface with finite total curvature). 
Recently, the first author, Kobayashi and Miyaoka \cite{KKM} gave an effective 
ramification estimate for the Gauss map of a wider class of complete minimal surfaces that includes algebraic minimal 
surfaces (this class is called the pseudo-algebraic minimal surfaces). It also provided new proofs of the Fujimoto and the Osserman 
theorems in this class and revealed the geometric meaning behind them. 
The first author obtained the same estimate for the hyperbolic Gauss map of pseudo-algebraic constant mean curvature one 
surfaces in hyperbolic three-space ${\H}^{3}$ \cite{Ka3}. These estimates correspond to the defect relation in Nevanlinna 
theory (\cite{JR}, \cite{Ko}, \cite{NO} and \cite{Ru}). 

The purpose of this paper is to study value distribution property of the Lagrangian Gauss map of improper affine fronts in 
${\R}^{3}$. The organization of this paper is as follows: In Section 1, we recall some definitions and 
basic facts about improper affine fronts in ${\R}^{3}$ which are used throughout this paper. 
We review, in particular, the definitions of completeness in the sense of \cite{KUY2, Ma1} and weakly completeness in the sense of \cite{UY2}. 
In Section 2, we give the upper bound for the totally ramified value number ${\delta}_{\nu}$ of 
the Lagrangian Gauss map $\nu$ of complete improper affine fronts in ${\R}^{3}$ (Theorem \ref{Thm2-1}). 
This estimate is effective in the sense that the upper bound which we obtained 
is described in terms of geometric invariants and sharp for some topological cases. Moreover, as a corollary of 
this estimate, we also obtain the best possible upper bound for the number $D_{\nu}$ of exceptional values of the Lagrangian 
Gauss map in this class (Corollary \ref{Cor2-1}).  We note that this class corresponds to that of algebraic minimal 
surfaces in Euclidean three-space. 
In Section 3, by applying the Fujimoto argument, we give the optimal estimate for $D_{\nu}$ of 
weakly complete improper affine fronts in ${\R}^{3}$ (Theorem \ref{The3-2}). 
We note that the best possible upper bound for $D_{g}$ of complete minimal surfaces obtained by Fujimoto is ``four'', 
but the best possible upper bound for $D_{\nu}$ of this class is ``three''. 
As an application of this estimate, from the viewpoint of the value distribution property, 
we provide a new and simple proof of the well-known result (\cite{Ca1}, \cite{Jo}) 
that any affine complete improper affine sphere must be an elliptic paraboloid (Corollary \ref{Cor2-2}). 
This result is the special case of the parametric affine Bernstein problem of affine maximal surfaces, which states that 
any affine complete affine maximal surface must be an elliptic paraboloid (\cite{Ca3}, \cite{LSZ}, and \cite{TW}). 
In Section 4, after reviewing some definitions and fundamental properties on flat fronts in ${\H}^{3}$, 
we study the value distribution of the ratio of canonical forms of weakly complete flat fronts in ${\H}^{3}$. 
Flat surfaces (resp. fronts) in ${\H}^{3}$ are closely related to improper affine spheres (resp. fronts) in  
${\R}^{3}$ (See \cite{Ma2} and also \cite{IM}). Indeed, we show that the ratio of canonical forms of weakly complete flat fronts in ${\H}^{3}$ 
have some properties similar to the Lagrangian Gauss map of weakly complete improper affine fronts in ${\R}^{3}$ 
(Propositions \ref{Pro3-1} and \ref{prop4-4}, Theorems \ref{The3-2} and \ref{Thm4-5}, Corollaries \ref{Cor2-2} and \ref{Cor4-6}). 
By Corollaries \ref{Cor2-2} and \ref{Cor4-6}, we can prove that the uniqueness theorems of complete surfaces (these results are called the 
parametric Bernstein type theorems) for these classes follow from the Liouville property, that is, the boundedness of their Gauss maps. 

Finally, the authors would like to particularly thank to Professors Wayne Rossman, Masaaki Umehara and 
Kotaro Yamada for their useful advice. The authors also thank to Professors Ryoichi Kobayashi, 
Masatoshi Kokubu, Reiko Miyaoka, Junjiro Noguchi and Yoshihiro Ohnita for their encouragement of our study.

\section{Preliminaries}
We first briefly recall some definitions and basic facts about affine differential geometry. 
Details can be found, for instance, in  \cite{LSZ} and \cite{NS}. Let $\Sigma$ be an oriented two-manifold, and $(\psi, \xi)$ a pair of an immersion $\psi\colon \Sigma\to {\R}^{3}$ 
into the affine three-space ${\R}^{3}$ and a vector field $\xi$ on $\Sigma$ along $\psi$ which is transversal to 
${\psi}_{\ast}(T\Sigma)$. Then the Gauss-Weingarten equations of $(\psi, \xi)$ are as follows: 
\begin{eqnarray}\label{eq1-1}
\left\{
\begin{array}{l}
D_{X}{\psi}_{\ast}Y = {\psi}_{\ast}({\nabla}_{X}Y)+g(X, Y)\xi\,, \\
D_{X} \xi           = -{\psi}_{\ast}(SX)+\tau (X)\xi\, , 
\end{array}
\right.
\end{eqnarray} 
where $D$ is the standard flat connection on ${\R}^{3}$. Here, $g$ is called the {\it affine metric} (or {\it Blaschke metric}) of the pair 
$(f, \xi)$. Indeed, we can easily show that the rank of $g$ is invariant under the change of the transversal vector 
field $\xi$. When $g$ is positive definite, we call $\psi$ a {\it locally strongly convex immersion}. 
From now on, we only consider the locally strongly convex case. 
Given an immersion $\psi\colon \Sigma\to {\R}^{3}$, we can uniquely choose the transversal vector field $\xi$ 
which satisfies the following conditions: 
\begin{enumerate}
\item[(i)] $\tau \equiv 0$ (or equivalently $D_{X}\xi \in {\psi}_{\ast}(T\Sigma)$ for all $X\in \mathfrak{X}(\Sigma))$\, ,
\item[(ii)] ${\text{vol}}_{g}(X_{1}, X_{2})=\det{({\psi}_{\ast}X_{1}, {\psi}_{\ast}X_{2}, \xi)}$ for all $X_{1}, X_{2}\in \mathfrak{X}(\Sigma)$\, ,
\end{enumerate}
where ${\text{vol}}_{g}$ is the volume form of the Riemannian metric $g$ and $\det$ is the standard volume element 
of ${\R}^{3}$. The transversal vector field $\xi$ which satisfies the two conditions above is called an 
{\it affine normal} (or {\it Blaschke normal}), and a pair $(\psi, \xi)$ of an immersion and its affine normal 
is called a {\it Blaschke immersion}. A Blaschke immersion $(f, \xi)$ with $S=0$ in (\ref{eq1-1}) is called 
an {\it improper affine sphere}. In this case, the transversal vector field $\xi$ is constant  because $\tau \equiv 0$. 
Thus a transversal vector field $\xi$ of an improper affine sphere is given by $\xi=(0, 0, 1)$ 
after a suitable affine transformation of ${\R}^{3}$. 
The {\it conormal map} $N\colon \Sigma\to ({\R}^{3})^{\ast}$ into the dual space of the affine three-space $({\R}^{3})^{\ast}$ 
for a given Blaschke immersion $(f, \xi)$ 
is defined as the immersion which satisfy the following conditions: 
\begin{enumerate}
\item[(i)] $N(f_{\ast}X)=0$ for all $X\in \mathfrak{X}(\Sigma)$\, ,
\item[(ii)] $N(\xi)=1$\,.
\end{enumerate}
For an improper affine sphere with affine normal $(0, 0, 1)$, we can write $N=(n, 1)$ with a smooth map 
$n\colon \Sigma\to {\R}^{2}$. 

Let ${\C}^{2}$ denotes the complex two-space with the complex coordinates $\zeta=({\zeta}_{1}, {\zeta}_{2})$, 
where $\zeta =x+\sqrt{-1}y$ $(x, y\in {\R}^{2})$. We consider the standard metric $g'$, the symplectic form ${\omega}'$, 
and the complex two-form $\Omega'$ given by
\begin{eqnarray}
g'&=&|d{\zeta}_{1}|^{2}+|d{\zeta}_{2}|^{2}\, , \nonumber \\ 
\omega'&=& \dfrac{\sqrt{-1}}{2}(d{\zeta}_{1}\wedge d{\bar{\zeta}}_{1}+d{\zeta}_{2}\wedge d{\bar{\zeta}}_{2})\, , \nonumber \\
\Omega'&=&d{\zeta}_{1}\wedge d{\zeta}_{2}\, . \nonumber 
\end{eqnarray}

Let $L\colon \Sigma\to {\C}^{2}$ be an special Lagrangian immersion with respect to the calibration $\Re (\sqrt{-1}{\Omega}')$. 
As in \cite{HL}, $L$ can be characterized as an immersion in ${\C}^{2}$ satisfying 
\[
\omega'|_{L(\Sigma)}\equiv 0, \quad \Im (\sqrt{-1}{\Omega}'|_{L(\Sigma)})\equiv 0\,,
\]
where $\Re$ and $\Im$ represent real and imaginary part, respectively. 

Then there exists the following correspondence between improper affine spheres in ${\R}^{3}$ and some nondegenerate 
special Lagrangian immersions in ${\C}^{2}$. 
\begin{fact}{\cite[Theorem 1]{Ma1}}\label{FACT-A}
Let $\psi=(x, \varphi)\colon \Sigma\to {\R}^{3}={\R}^{2}\times {\R}$ be an improper affine sphere with the conormal map 
$N=(n, 1)$. The map $L_{\psi}\colon \Sigma\to {\C}^{2}$ given by 
\[
L_{\psi}:=x+\sqrt{-1}n
\]
is an special Lagrangian immersion such that
\begin{enumerate}
\item[(i)] The induced metric $d{\tau}^{2}:=\langle dx, dx \rangle + \langle dn, dn \rangle$ is conformal to the affine metric $g$ of $\psi$\,,  
\item[(ii)] The metric $ds^{2}:=\langle dx, dx \rangle$ is a nondegenerate flat metric\,, 
\end{enumerate}
where $\langle \cdot\, , \cdot \rangle$ denotes the standard inner product in ${\R}^{2}$. 
\end{fact}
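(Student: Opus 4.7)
The plan is to derive everything from a single conormal identity together with one instance of Cayley--Hamilton. Applying $N(\psi_\ast X) = 0$ to $\psi_\ast X = (X(x), X(\varphi))$ with $N = (n,1)$ yields
\[
d\varphi = -\langle n, dx\rangle.
\]
Because $\xi = (0,0,1)$ is transversal to $\psi_\ast(T\Sigma)$, the component map $x\colon \Sigma\to\R^2$ is a local diffeomorphism, so one may work in local coordinates where $x$ itself is the parameter. In those coordinates $n = -\nabla\varphi$, the equiaffine volume condition collapses to the improper affine sphere equation $\det H = 1$ with $H := \mathrm{Hess}\,\varphi$, and the affine metric has matrix $H$.

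I would then check the two differential conditions. Expanding $\zeta_k = x_k + \sqrt{-1}\,n_k$ gives
\[
L_\psi^\ast\omega' = dx_1\wedge dn_1 + dx_2\wedge dn_2 = -d\langle n, dx\rangle = d^2\varphi = 0,
\]
so $L_\psi$ is Lagrangian. Expanding $L_\psi^\ast\Omega' = (dx_1+\sqrt{-1}\,dn_1)\wedge(dx_2+\sqrt{-1}\,dn_2)$ and using $dn_k = -\sum_j\varphi_{kj}\,dx_j$ give $dn_1\wedge dn_2 = \det(H)\,dx_1\wedge dx_2 = dx_1\wedge dx_2$, whence
\[
\Re(L_\psi^\ast\Omega') = dx_1\wedge dx_2 - dn_1\wedge dn_2 = 0,
\]
equivalently $\Im(\sqrt{-1}\,L_\psi^\ast\Omega') = 0$, the calibration condition. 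That $L_\psi$ is an immersion follows at once from $dL_\psi = dx + \sqrt{-1}\,dn$, since its real part $dx$ is already injective.

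For the metric claims, the induced metric $d\tau^2 = \langle dx, dx\rangle + \langle dn, dn\rangle$ has matrix $I + H^2$ in $x$-coordinates while $g$ has matrix $H$. The Cayley--Hamilton identity for the $2\times 2$ matrix $H$, combined with $\det H = 1$, reads
\[
I + H^2 = (\mathrm{tr}\,H)\,H,
\]
so $d\tau^2 = (\mathrm{tr}\,H)\,g$, which is claim (i). Claim (ii) is immediate: $ds^2 = \langle dx, dx\rangle$ is the pullback of the flat Euclidean metric on $\R^2$ by the local diffeomorphism $x$, hence nondegenerate and flat. The only step doing real work is the conformality in (i), which reduces to the algebraic coincidence $I + H^2 = (\mathrm{tr}\,H)\,H$ for $2\times 2$ symmetric $H$ with $\det H = 1$; every other step is exterior differentiation of $d\varphi = -\langle n, dx\rangle$ or a one-line matrix computation in graph coordinates.
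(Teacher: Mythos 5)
Your argument is correct and complete. The paper itself states this result as a Fact imported from Mart\'inez's paper \cite[Theorem 1]{Ma1} and gives no proof, so there is nothing internal to compare against; but your derivation --- graph coordinates via transversality of $\xi=(0,0,1)$, the identity $d\varphi=-\langle n,dx\rangle$, the reduction of the equiaffine volume condition to $\det H=1$, and the Cayley--Hamilton identity $I+H^{2}=(\operatorname{tr}H)\,H$ for the conformality --- is exactly the standard route and each step checks out. The only point worth making explicit is that local strong convexity forces $H$ to be positive definite, so the conformal factor $\operatorname{tr}H$ is positive and (i) genuinely asserts conformality rather than mere proportionality by a function of indefinite sign.
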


\begin{fact}{\cite[Theorem 2]{Ma1}}\label{FACT-B}
Let $L_{\psi}=x+\sqrt{-1}n \colon \Sigma\to {\C}^{2}$ be a special Lagrangian immersion such that $ds^{2}:=\langle dx, dx \rangle$ is nondegenarate. 
Then 
\[
\psi =\biggl(x, -\int \langle n, dx \rangle  \biggr)
\]
is an improper affine sphere which is well-defined if and only if $\int_{c} \langle n, dx \rangle = 0$ for any loop $c$ on $\Sigma$. 
\end{fact}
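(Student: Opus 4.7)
The statement has two parts, and the plan is to prove them by using the Lagrangian and the ``special'' halves of the hypothesis on $L_{\psi}$ separately. For well-definedness, I would show that the 1-form $\langle n, dx\rangle = n_{1}\,dx_{1} + n_{2}\,dx_{2}$ is closed on $\Sigma$. Writing $\zeta_{j} = x_{j} + \sqrt{-1}\,n_{j}$, a direct expansion gives $d\zeta_{j}\wedge d\bar{\zeta}_{j} = -2\sqrt{-1}\,dx_{j}\wedge dn_{j}$, so the Lagrangian condition $\omega'|_{L_{\psi}(\Sigma)}\equiv 0$ is equivalent to $dx_{1}\wedge dn_{1} + dx_{2}\wedge dn_{2} = 0$, that is, $d\langle n, dx\rangle = 0$. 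A closed 1-form on $\Sigma$ admits a single-valued primitive if and only if all its periods vanish, yielding the stated criterion $\int_{c}\langle n, dx\rangle = 0$ for every loop $c$.

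Assume now the primitive is globally defined and set $\varphi = -\int \langle n, dx\rangle$, so that $d\varphi = -\langle n, dx\rangle$. Since $ds^{2} = \langle dx, dx\rangle$ is nondegenerate, $x\colon \Sigma \to \R^{2}$ is a local diffeomorphism, and in the induced coordinates $(x_{1}, x_{2})$ the map $\psi = (x, \varphi(x))$ is a graph with $\nabla\varphi = -n$; in particular $\psi$ is an immersion. The constant field $\xi = (0,0,1)$ is transversal to $\psi_{\ast}(T\Sigma)$ and satisfies $D_{X}\xi = 0$, so in (\ref{eq1-1}) we simultaneously read off $\tau\equiv 0$ and $S = 0$; only the Blaschke volume normalization (ii) remains. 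A short computation from (\ref{eq1-1}) in the chart $(x_{1}, x_{2})$ gives $g(\partial_{x_{i}}, \partial_{x_{j}}) = \varphi_{x_{i}x_{j}}$ and $\det(\psi_{\ast}\partial_{x_{1}}, \psi_{\ast}\partial_{x_{2}}, \xi) = 1$, so (ii) is equivalent to the Monge-Amp\`ere equation $\det(\nabla^{2}\varphi) = 1$.

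To produce this equation I would unpack the remaining ``special'' part of the hypothesis, $\Im(\sqrt{-1}\,\Omega'|_{L_{\psi}(\Sigma)})\equiv 0$. Expanding $\Omega' = d\zeta_{1}\wedge d\zeta_{2}$ gives $\Im(\sqrt{-1}\,\Omega') = dx_{1}\wedge dx_{2} - dn_{1}\wedge dn_{2}$, and substituting $n = -\nabla\varphi$ turns $dn_{1}\wedge dn_{2}$ into $\det(\nabla^{2}\varphi)\,dx_{1}\wedge dx_{2}$, whence the vanishing forces $\det(\nabla^{2}\varphi) = 1$. This equation in particular makes $\nabla^{2}\varphi$ definite, and positive definiteness (so that $g$ is a Riemannian metric, i.e.\ local strong convexity) is secured by using the orientation of $\Sigma$ to fix the sign; combined with the previous paragraph this verifies the full definition of an improper affine sphere.

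I expect the main obstacle to be not any conceptually deep step but the careful bookkeeping needed to translate the abstract Gauss--Weingarten data (\ref{eq1-1}) into the explicit graph picture, and to keep clean track of which conclusion (closedness of $\langle n, dx\rangle$ versus the Monge-Amp\`ere equation) comes from which half (Lagrangian vs.\ special) of the hypothesis on $L_{\psi}$. Once those identifications are set up, both directions collapse into the two elementary 2-form calculations above.
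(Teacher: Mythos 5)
The paper gives no proof of this statement: it is imported verbatim as Theorem~2 of Mart\'inez \cite{Ma1}, so there is no internal argument to compare yours against. On its own merits your proposal is correct and is the natural derivation: the two pullback computations $\omega'|_{L_{\psi}(\Sigma)}=dx_{1}\wedge dn_{1}+dx_{2}\wedge dn_{2}$ and $\Im(\sqrt{-1}\,\Omega')|_{L_{\psi}(\Sigma)}=dx_{1}\wedge dx_{2}-dn_{1}\wedge dn_{2}$ check out, the first giving exactly the closedness of $\langle n,dx\rangle$ (hence the period criterion for well-definedness), the second giving $\det(\nabla^{2}\varphi)=1$ once $n=-\nabla\varphi$ is substituted, which is precisely the volume normalization (ii) for $\xi=(0,0,1)$; conditions (i) and $S=0$ are immediate from $D_{X}\xi=0$ in \eqref{eq1-1}. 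The one soft spot is your last remark: $\det(\nabla^{2}\varphi)=1$ forces the Hessian to be definite but the orientation of $\Sigma$ alone does not select the positive sign --- what it actually determines is whether the affine normal is $(0,0,1)$ or $(0,0,-1)$, and $\psi$ is locally strongly convex (hence an improper affine sphere) in either case, so the conclusion is unaffected.
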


Next, using the notations defined as above, we introduce the notion of improper affine fronts, 
which is a generalization of improper affine spheres with some admissible singularities. 
\begin{definition}[{\cite[Definition 1]{Ma1}}]
A map $\psi=(x, \varphi)\colon \Sigma\to {\R}^{3}={\R}^{2}\times {\R}$ is called an {\it improper affine front} 
if $\psi$ is expressed as 
\[
\psi =\biggl(x, -\int \langle n, dx \rangle  \biggr)
\]
by a special Lagrangian immersion $L_{\psi}=x+\sqrt{-1}n\colon \Sigma \to {\C}^{2}$, 
where $\langle \cdot\, , \cdot \rangle$ denotes the standard inner product in ${\R}^{2}$. 
Nonregular points of $\psi$ correspond with degenerate points of 
$ds^{2}:=\langle dx, dx \rangle$. We call $ds^{2}$ the {\it flat fundamental form} of $\psi$. 
\end{definition}

From Facts \ref{FACT-A} and \ref{FACT-B}, at the nondegenerate points of $ds^{2}$, the induced metric 
$d{\tau}^{2}:=\langle dx, dx \rangle + \langle dn, dn \rangle$ is conformal to the affine metric 
$g:=-\langle dx, dn \rangle$. 

For any improper affine front $\psi\colon \Sigma \to{\R}^{3}$, considering the conformal structure given by the induced metric $d{\tau}^{2}$ 
of its associated special Lagrangian immersion $L_{\psi}$, we regard $\Sigma$ as a Riemann surface. 

Since every special Lagrangian immersion in ${\C}^{2}$ is a holomorphic curve with respect to the complex coordinates $\zeta=({\zeta}_{1}, {\zeta}_{2})$ 
(see \cite{Joy}), we see that there exists a holomorphic regular curve $\alpha\colon \Sigma \to {\C}^{2}$, 
$\alpha :=(F, G)$, such that if we identify vectors of ${\R}^{2}$ with complex numbers in the standard way: 
\[
(r, s)=r+\sqrt{-1}s, \quad r,\, s\in \R
\]
then we can write 
\begin{equation}\label{eq1-2}
x=G+\bar{F}, \quad n=\bar{F}-G
\end{equation}
and since the inner product of two vectors ${\zeta}_{i}=r_{i}+\sqrt{-1}s_{i}$ $(i=1, 2)$ is given by 
$\langle {\zeta}_{1}, {\zeta}_{2} \rangle = \Re({{\zeta}_{1}\bar{{\zeta}_{2}}})$, 
then the flat fundamental form $ds^{2}$, the induced metric $d{\tau}^{2}$ and the affine metric $g$ are given, 
respectively,  by
\begin{eqnarray}\label{eq1-3}
ds^{2}&=&|dF+dG|^{2}=|dF|^{2}+|dG|^{2}+dGdF+\overline{dFdG}\,, \nonumber \\
d{\tau}^{2}&=&2(|dF|^{2}+|dG|^{2})\,, \\
g&=&|dG|^{2}-|dF|^{2}\,. \nonumber
\end{eqnarray}
The nontrivial part of the Gauss map of $L_{\psi}\colon \Sigma\to {\C}^{2}\cong {\R}^{4}$ (see \cite{CM}) is the meromorphic function  
$\nu\colon \Sigma \to \C\cup \{\infty\}$ given by 
\begin{equation}\label{eq1-4}
\nu :=\frac{dF}{dG}
\end{equation}
which is called the {\it Lagrangian Gauss map} of $\psi$. 

Mart\'inez \cite{Ma1} gave the following representation formula for improper affine fronts 
in terms of two holomorphic functions. 
This generalized a formula in \cite{FMM}. 

\begin{fact}[{\cite[Theorem 3]{Ma1}}] \label{IA-fact1}
Let $\psi =(x, \varphi) \colon \Sigma \to {\R}^{3}={\C}\times{\R}$ be an improper affine front. 
Then there exists a holomorphic regular curve $\alpha :=(F, G)\colon \Sigma \to {\C}^{2}$ 
such that 
\begin{equation}\label{IA-Wrep}
\psi:= \biggl(G+\bar{F}, \frac{|G|^{2}-|F|^{2}}{2}+\Re\biggl(GF-\int FdG \biggr) \biggr)\,.
\end{equation}
Here, the conormal map of $\psi$ becomes 
\[
N=(\bar{F}-G, 1)\,.
\]
Conversely, given a Riemann surface $\Sigma$ and a holomorphic regular curve $\alpha :=(F, G)\colon \Sigma \to {\C}^{2}$, 
then (\ref{IA-Wrep}) gives an improper affine front which is well-defined if and only if $\Re \int_{c} FdG= 0$ for any loop 
$c$ in $\Sigma$. 
\end{fact}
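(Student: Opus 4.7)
My plan is to exploit the bijective correspondence between improper affine fronts in $\R^{3}$ and special Lagrangian immersions in $\C^{2}$ recorded in Facts \ref{FACT-A} and \ref{FACT-B}, together with the holomorphic parameterization (\ref{eq1-2}), and then reduce the statement to a short calculation of the primitive $\varphi=-\int\langle n,dx\rangle$.

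For the direct implication, an improper affine front $\psi=(x,\varphi)$ comes with its associated special Lagrangian immersion $L_{\psi}=x+\sqrt{-1}\,n\colon \Sigma\to\C^{2}$; by the cited result of Joyce \cite{Joy} and the conformal structure determined by the induced metric $d\tau^{2}$, this $L_{\psi}$ is a holomorphic curve. Accordingly there exist holomorphic functions $F,G\colon\Sigma\to\C$ with $x=G+\bar F$ and $n=\bar F-G$ as in (\ref{eq1-2}), and the pair $(F,G)$ is a regular curve because $|dF|^{2}+|dG|^{2}=\tfrac{1}{2}d\tau^{2}$ never vanishes. This immediately yields the first coordinate of (\ref{IA-Wrep}) and the conormal formula $N=(\bar F-G,1)$. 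I would then substitute this parameterization into $\langle n,dx\rangle=\Re(n\,\overline{dx})=\Re\bigl((\bar F-G)(d\bar G+dF)\bigr)$, expand the four resulting terms, and simplify using $\Re(\bar F\,dF)=\tfrac{1}{2}d|F|^{2}$, $\Re(G\,d\bar G)=\tfrac{1}{2}d|G|^{2}$, and the Leibniz identity $d(FG)=F\,dG+G\,dF$ in order to trade between $\Re(F\,dG)$ and $\Re(G\,dF)$. The primitive of the resulting expression then rearranges into the form displayed in (\ref{IA-Wrep}), modulo an additive real constant that may be absorbed into $\varphi$.

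For the converse, given a holomorphic regular curve $\alpha=(F,G)\colon\Sigma\to\C^{2}$ I will set $x:=G+\bar F$, $n:=\bar F-G$, and $L_{\psi}:=x+\sqrt{-1}\,n$, and check directly from the holomorphicity of $F$ and $G$ that the two defining conditions $\omega'|_{L_{\psi}(\Sigma)}\equiv 0$ and $\Im(\sqrt{-1}\Omega')|_{L_{\psi}(\Sigma)}\equiv 0$ of a special Lagrangian immersion are satisfied. Fact \ref{FACT-B} then supplies the improper affine front, provided the height function is globally well-defined. From the computation in the first step, $\langle n,dx\rangle$ decomposes as an exact form plus a non-trivial real multiple of $\Re(F\,dG)$, so its period around any loop $c\subset\Sigma$ vanishes if and only if $\Re\int_{c}F\,dG=0$, which is precisely the condition stated in the theorem.

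The main obstacle will be the bookkeeping in the complex calculation of $\varphi$: the identity $\Re(F\,dG)+\Re(G\,dF)=d\Re(FG)$, used to convert the expanded integrand into an exact form plus a single periodic term, is the one place where a stray sign or factor would propagate simultaneously into the explicit shape of (\ref{IA-Wrep}) and into the period condition for well-definedness. Once that simplification is carried out cleanly, the rest of the argument is a direct dictionary translation through Facts \ref{FACT-A} and \ref{FACT-B} and the structural formulas (\ref{eq1-2}).
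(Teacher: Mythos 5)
The paper itself offers no proof of this statement---it is quoted verbatim as \cite[Theorem 3]{Ma1}---so there is nothing internal to compare your argument against; I can only assess it on its own terms. Your strategy is the standard (and surely the intended) one: pass to the special Lagrangian immersion $L_{\psi}=x+\sqrt{-1}\,n$, use the holomorphic parameterization $x=G+\bar F$, $n=\bar F-G$ of (\ref{eq1-2}), and integrate $\varphi=-\int\langle n,dx\rangle$; the converse is the same dictionary run backwards plus a period computation. The outline is sound, and your verification of the Lagrangian and special conditions in the converse direction does go through using only $dF\wedge dG=0$ on a Riemann surface.

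The one place you declined to commit is exactly the place where the argument does not close as stated. Carrying out your own plan: with $\overline{dx}=\overline{dG}+dF$ one gets
\[
\langle n,dx\rangle=\Re\bigl((\bar F-G)(\overline{dG}+dF)\bigr)
=2\,\Re(F\,dG)+\tfrac{1}{2}\,d\bigl(|F|^{2}-|G|^{2}\bigr)-d\,\Re(GF)\,,
\]
because the term $\Re(\bar F\,\overline{dG})$ contributes $\Re(F\,dG)$ and the term $-\Re(G\,dF)=-d\Re(GF)+\Re(F\,dG)$ contributes a \emph{second} copy of $\Re(F\,dG)$ rather than cancelling it. Hence $\varphi=\frac{|G|^{2}-|F|^{2}}{2}+\Re\bigl(GF-2\int F\,dG\bigr)$, which disagrees with (\ref{IA-Wrep}) as printed by a factor of $2$ on the integral. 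A concrete check with the data $(F,G)=(cz,z)$, $c\in\R$, gives $\varphi=\frac{1-c^{2}}{2}|z|^{2}$ directly (the elliptic paraboloid with $\det\nabla^{2}\varphi=1$), which matches the version with the $2$ and not the displayed one; the original \cite[Theorem 3]{Ma1} indeed carries the factor $2$. So your proof, done carefully, establishes the corrected formula, not formula (\ref{IA-Wrep}) verbatim; the period condition $\Re\int_{c}F\,dG=0$ is unaffected since only the nonexact term $2\Re(F\,dG)$ has periods. Two smaller points: in the converse you invoke Fact \ref{FACT-B}, which presupposes $ds^{2}=\langle dx,dx\rangle$ nondegenerate---a front is allowed to have $|dF|=|dG|$ somewhere, so you should instead appeal to the definition of improper affine front, using regularity of $(F,G)$ only to guarantee $d\tau^{2}=2(|dF|^{2}+|dG|^{2})>0$; and the conormal formula $N=(\bar F-G,1)$ is immediate from (\ref{eq1-2}) as you say, so no issue there.
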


We call the pair $(F, G)$ the {\it Weierstrass data} of $\psi$. 
Note that the singular points of $\psi$ correspond with the points where $|dF|=|dG|$, that is, $|\nu|=1$ 
(\cite{Ma1}, see also \cite{Na}). 

An improper affine front $\psi\colon \Sigma \to {\R}^{3}$ is said to be {\it complete} 
if there exists a symmetric two-tensor $T$ such that $T=0$ outside a compact set $C\subset \Sigma$ 
and $ds^{2}+T$ is a complete Riemannian metric on $\Sigma$, where $ds^{2}$ is the flat fundamental form 
of $\psi$. This definition is similar to the definition of completeness for fronts \cite{KUY2}. 

\begin{fact}\label{IA-fact2}
A complete improper affine front $\psi\colon \Sigma\to {\R}^{3}$ satisfies the following two conditions: 
\begin{enumerate}
\item[(i)] $\Sigma$ is biholomorphic to ${\overline{\Sigma}}_{\gamma}\backslash \{p_{1},\ldots, p_{k}\}$, 
where ${\overline{\Sigma}}_{\gamma}$ is a closed Riemann surface of genus $\gamma$ and 
$p_{j}\in {\overline{\Sigma}}_{\gamma}$ $(j=1,\ldots, k)$ \cite{Hu}. 
\item[(ii)] The Weierstrass data $(F, G)$ of $\psi$ can be extended meromorphically to ${\overline{\Sigma}}_{\gamma}$. 
In particular, its Lagrangian Gauss map can also be a meromorphic function on ${\overline{\Sigma}}_{\gamma}$ 
\cite{Ma1}. 
\end{enumerate}
\end{fact}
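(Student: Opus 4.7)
The plan is to prove (i) via Huber's theorem after upgrading the given notion of completeness to a genuine complete Riemannian metric of non-positive curvature on $\Sigma$, and to prove (ii) via a removable-singularity analysis of the Weierstrass data $(F,G)$ at each puncture $p_j$, along the same lines as Osserman's analysis of complete minimal surfaces of finite total curvature.

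For (i), first I would observe that the induced metric $d\tau^{2} = 2(|dF|^{2} + |dG|^{2})$ coming from the special Lagrangian immersion $L_{\psi}$ is everywhere Riemannian on $\Sigma$, since the holomorphic curve $\alpha = (F,G)\colon \Sigma \to \mathbf{C}^{2}$ is regular by hypothesis. Since $d\tau^{2} \geq ds^{2}$ as symmetric two-tensors, and $T$ is supported in a compact set $C \subset \Sigma$, the completeness of $ds^{2} + T$ upgrades to completeness of $d\tau^{2}$: any divergent path has infinite $ds^{2}$-length outside $C$, hence infinite $d\tau^{2}$-length as well. Because $d\tau^{2}$ is the pullback of the standard flat metric on $\mathbf{C}^{2}$ by the holomorphic map $\alpha$, its Gaussian curvature is non-positive. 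The critical step is then to establish finite total curvature so that Huber's theorem applies and produces the desired conformal structure. I would derive the needed finiteness from the flat fundamental form itself: $ds^{2} = |d(F+G)|^{2}$ is the pullback of the flat metric on $\mathbf{C}$ by the holomorphic function $F+G$, and outside $C$ it is a complete flat Riemannian metric. The classification of complete flat annular ends then forces each end of $\Sigma \setminus C$ to be conformally a punctured disk, with only finitely many such ends.

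For (ii), on a punctured-disk neighborhood $\Delta^{*} = \{z \in \mathbf{C} : 0 < |z| < 1\}$ of each puncture $p_j$, I would analyze the behavior of $F$ and $G$ as $z \to 0$. Completeness of $ds^{2} = |d(F+G)|^{2}$ near $p_j$ rules out $F+G$ being bounded near $z = 0$: otherwise the image of $\Delta^{*}$ under $F+G$ would lie in a bounded region of $\mathbf{C}$, giving curves of finite $ds^{2}$-length to the puncture. A finer length estimate using Casorati--Weierstrass further rules out the possibility of an essential singularity, so $F+G$ has at worst a pole at $z = 0$. A parallel argument based on the $d\tau^{2}$-completeness established in (i), together with the regularity of $\alpha = (F,G)$, shows that $F$ and $G$ individually have at worst poles at $z = 0$. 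The Lagrangian Gauss map $\nu = F'/G'$ then extends meromorphically to $\overline{\Sigma}_{\gamma}$.

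The step I expect to be the main obstacle is the finite-total-curvature / finitely-many-ends argument in (i). While each end carries a complete flat structure from $ds^{2}$, ruling out infinitely many ends and controlling the global conformal type uniformly requires careful use of the compactly supported modification $T$: this tensor is precisely the technical device that lets us ignore isolated interior degeneracies of $ds^{2}$, namely the critical points of $F+G$ inside $C$, while preserving flatness and completeness at infinity. A secondary subtlety in (ii) is separating the behavior of $F$ and $G$ individually from that of the sum $F+G$, which is handled by invoking the stronger $d\tau^{2}$-completeness rather than $ds^{2}$-completeness alone.
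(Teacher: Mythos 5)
First, note that the paper offers no proof of this statement: it is quoted as a Fact, with (i) attributed to Huber \cite{Hu} and (ii) to Mart\'inez \cite{Ma1}, so your proposal has to be measured against the standard arguments in those references. Your overall strategy (Huber's theorem plus an Osserman-type analysis of the ends) is the right one, but two of your key steps contain genuine errors. The first is the identification of the flat fundamental form. You treat $ds^{2}$ as the pullback of the flat metric of $\C$ under the \emph{holomorphic} function $F+G$, whose degeneracies would be the isolated zeros of $d(F+G)$. In fact $ds^{2}=\langle dx,dx\rangle$ with $x=G+\bar{F}$, a \emph{harmonic} map; expanding, $ds^{2}=|dF|^{2}+|dG|^{2}+2\Re(dF\,dG)$, which is not conformal to $d\tau^{2}$ and degenerates exactly on $\{|\nu|=1\}$ --- generically a family of curves (the singular set of the front), not isolated points. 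The metric is still flat where nondegenerate (pullback of a flat metric under a local diffeomorphism into $\R^{2}$), so the conclusion you want survives, but your description of the role of $T$ is off, and, more seriously, the fact that $ds^{2}$ and $d\tau^{2}$ induce different conformal structures means that Huber's theorem applied to the complete, flat-outside-a-compact-set metric $ds^{2}+T$ yields the conformal type of $\Sigma$ for the wrong complex structure. Your alternative --- applying Huber directly to $d\tau^{2}$ --- requires finite total curvature of $d\tau^{2}$, which is essentially equivalent to the meromorphic extendability of $\nu$ that you are trying to prove; it does not follow from the flatness of $ds^{2}$ as you propose. The standard repair: Huber applied to $ds^{2}+T$ gives finite connectivity; on each annular end $ds^{2}$ is nondegenerate, so $|\nu|\neq 1$ there and, after interchanging $F$ and $G$ (which preserves $ds^{2}$ and $d\tau^{2}$ and replaces $\nu$ by $1/\nu$), one may assume $|\nu|<1$; then $ds^{2}\leq(1+|\nu|)^{2}|dG|^{2}\leq 4|dG|^{2}$, so $|dG|^{2}$ is a complete flat metric on the end that \emph{is} conformal for the $d\tau^{2}$-structure, and Osserman's lemma gives that the end is conformally a punctured disk with $dG$ extending to a pole there.

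The second gap is in (ii). You propose to extend $F$ and $G$ individually using only the completeness of $d\tau^{2}$ together with the regularity of $\alpha$. Weak completeness alone cannot yield meromorphic extendability: Example \ref{Ex-Voss} of the paper is weakly complete with $F,G$ not even single-valued on $\Sigma$, and one can easily write down $F',G'$ with essential singularities at a puncture while $(|F'|^{2}+|G'|^{2})|dz|^{2}$ is complete there. The ingredient your argument never uses is the nondegeneracy of $ds^{2}$ on the ends: with $|\nu|<1$ on a punctured-disk end as above, $\nu$ is a \emph{bounded} holomorphic function, hence extends holomorphically by the removable singularity theorem (not by a Casorati--Weierstrass pole-versus-essential-singularity dichotomy), and then $dF=\nu\,dG$ extends meromorphically because $dG$ does. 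Your analysis of $F+G$ is both aimed at the wrong function (see above) and insufficient: boundedness of the image does not by itself produce a finite-length divergent path, and in any case controlling the sum says nothing about $F$ and $G$ separately.
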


Each puncture point $p_{j}$ $(j=1,\ldots, k)$ is called an {\it end} of $\psi$. 
On the other hand, an improper affine front is said to be {\it weakly complete} if the induced metric $d{\tau}^{2}$ as in (\ref{eq1-3}) 
is complete. Note that the universal cover of a weakly complete improper affine front is also weakly complete, 
but completeness is not preserved when lifting to the universal cover. 
The relationship between completeness and weakly completeness in this class is as follows: 

\begin{fact}[{\cite[Remark 4]{UY2}}]\label{IA-fact3}
An improper affine front in ${\R}^{3}$ is complete if and only if it is weakly complete, 
the singular set is compact and all ends are biholomorphic to a punctured disk. 
\end{fact}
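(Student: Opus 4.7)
The plan is to compare the two candidate metrics $ds^{2}=|dF+dG|^{2}$ and $d\tau^{2}=2(|dF|^{2}+|dG|^{2})$, exploiting the fact that the singular set is precisely $\{|\nu|=1\}$ with $\nu=dF/dG$, and the pointwise Cauchy--Schwarz bound $|dF+dG|^{2}\le 2(|dF|^{2}+|dG|^{2})$, i.e., $ds^{2}\le d\tau^{2}$ globally. For the forward direction, assume $\psi$ is complete with a compactly supported witness $T$, supported in a compact set $C\subset\Sigma$. Fact \ref{IA-fact2}(i) gives $\Sigma\cong\overline{\Sigma}_{\gamma}\setminus\{p_{1},\ldots,p_{k}\}$, so each end has a punctured-disk neighbourhood. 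On $\Sigma\setminus C$ we have $ds^{2}+T=ds^{2}$, and positivity of $ds^{2}+T$ forces $ds^{2}$ to be non-degenerate there, so the singular set is contained in $C$ and hence compact. Finally, any divergent curve has $d\tau^{2}$-length at least its $ds^{2}$-length, which outside $C$ equals its $(ds^{2}+T)$-length and is therefore infinite; thus $d\tau^{2}$ is complete.

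For the converse, assume $d\tau^{2}$ complete, the singular set $S$ compact, and each end biholomorphic to a punctured disk. The core step is to prove that $ds^{2}$ is already complete toward each end. Fix $p_{j}$ with coordinate $z$ on $D_{j}^{\ast}=\{0<|z|<r\}$, chosen small enough that $D_{j}^{\ast}\cap S=\emptyset$, so $|\nu|\ne 1$ everywhere on $D_{j}^{\ast}$. Then $\nu$ cannot have an essential singularity at $p_{j}$: by the great Picard theorem it would take every value in $\C\cup\{\infty\}$ with at most two exceptions on any punctured neighbourhood, producing points with $|\nu|=1$, a contradiction. Hence $\nu$ extends meromorphically across $p_{j}$ with $|\nu(p_{j})|\ne 1$, and after further shrinking $r$ one may assume $|\nu|$ is bounded away from $1$ on $D_{j}^{\ast}$. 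A direct reverse-triangle estimate (splitting the cases $|\nu|\le 1-\varepsilon$ and $|\nu|\ge 1+\varepsilon$, and treating zeros of $dG$ separately) yields a constant $c>0$ with $|dF+dG|\ge c(|dF|+|dG|)$ on $D_{j}^{\ast}$, and hence $ds^{2}\ge (c^{2}/2)\,d\tau^{2}$ there. Completeness of $d\tau^{2}$ then transfers to $ds^{2}$ along this end.

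To finish, choose a compact set $C\subset\Sigma$ containing $S$ in its interior so that $\Sigma\setminus C$ is a disjoint union of end neighbourhoods of the above type, take a smooth nonnegative cutoff $\chi$ supported in $C$ with $\chi>0$ on $S$, and set $T=\chi\,d\tau^{2}$. Then $T$ is compactly supported, $ds^{2}+T$ is positive definite (positive on $\{\chi>0\}$, and $ds^{2}$ itself is positive definite where $\chi=0$ since such points lie off $S$), and agrees with $ds^{2}$ outside $C$; any divergent sequence must eventually leave $C$ and has infinite $ds^{2}$-length inside the corresponding end. Hence $ds^{2}+T$ is complete and so is $\psi$. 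I expect the only genuinely non-routine step to be the meromorphic-extension lemma for $\nu$: without compactness of $S$, examples such as $\nu(z)=1-z$ near $z=0$ show that $|dF+dG|$ can lose uniform control by $|dF|+|dG|$ while $d\tau^{2}$ remains complete, so the great-Picard input combined with the compactness of $S$ is the essential ingredient of the whole proof.
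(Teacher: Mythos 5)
The paper does not prove this statement---it is quoted verbatim from \cite[Remark 4]{UY2}---so there is no in-paper argument to measure you against; I can only assess your proof on its own terms, and on those terms it is essentially correct. The forward direction (completeness gives $d\tau^{2}\geq ds^{2}=ds^{2}+T$ off $C$, positive-definiteness of $ds^{2}+T$ confines the closed singular set to $C$, and Fact \ref{IA-fact2}(i) supplies the punctured-disk ends) is routine and fine. The converse is where the content lies, and you have correctly identified the mechanism: compactness of $S=\{|\nu|=1\}$ means $\nu$ omits the entire unit circle on a punctured neighbourhood of each end, so the great Picard theorem forces a meromorphic extension, and the resulting two-sided comparison $c\,d\tau^{2}\leq ds^{2}\leq d\tau^{2}$ near the ends lets completeness pass between the two metrics. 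This is precisely the role the Umehara--Yamada completeness lemma plays in this setting. Three small points should be tightened. First, after extending $\nu$ you assert $|\nu(p_{j})|\neq 1$ as if automatic; it is not---you must invoke the open mapping theorem (a nonconstant $\nu$ with $|\nu(p_{j})|=1$ would produce points of $D_{j}^{\ast}$ with $|\nu|=1$), the constant case being excluded since $D_{j}^{\ast}\cap S=\emptyset$. Second, choosing a single compact $C$ with $\Sigma\setminus C$ a disjoint union of end collars presupposes finitely many ends; this does follow from your hypotheses (a punctured-disk neighbourhood isolates its end, so the compact end space is discrete, hence finite), but it deserves a sentence. Third, your closing illustration is off: for $\nu(z)=1-z$ the ratio $|1+\nu|/(1+|\nu|)$ stays near $1$ as $z\to 0$, so uniform control is \emph{not} lost there, even though the singular circle $|1-z|=1$ does accumulate at the end; the comparison only degenerates as $\nu\to -1$, so the example you want is $\nu(z)=z-1$. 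None of these affects the architecture of the argument.
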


Finally, we give two examples in \cite[Section 4]{Ma1} which play important roles in the following sections. 

\begin{example}[Elliptic paraboloids]\label{IA-Ex1}
An elliptic paraboloid can be obtained by taking $\Sigma =\C$ and Weierstrass data $(cz, z)$, where $c$ is constant. 
It is complete, and its Lagrangian Gauss map is constant. Note that, if $|c|=1$, then an improper affine front constructed 
from this data is a line in ${\R}^{2}$. 
\end{example}

\begin{example}[Rotational improper affine fronts]\label{IA-Ex2}
A rotational improper affine front is obtained by considering $\Sigma =\C\backslash \{0\}$ and 
Weierstrass data $(z, \pm r^{2}/z)$, where $r\in {\R}\backslash \{0\}$. It is complete 
and its Lagrangian Gauss map $\nu= \mp z^{2}/r^{2}$. In particular, $\nu$ omits two values, $0$, $\infty$. 
\end{example}

\section{A ramification estimate for the Lagrangian Gauss map of complete improper affine fronts}

We first recall the definion of the totally ramified value number of a meromorphic function. 

\begin{definition}[\cite{Ne}]
Let $\Sigma$ be a Riemann surface and $h$ a meromorphic function on $\Sigma$. 
We call $b\in \C\cup\{\infty\}$ a {\it totally ramified value} of $h$ 
when $h$ branches at any inverse image of $b$. 
We regard exceptional values also as totally ramified values, here we call a point of $\C\cup\{\infty\}\backslash h(\Sigma)$ 
an exceptional value of $h$. 
Let $\{a_1,\dots,a_{r_0},b_1,\dots,b_{l_0}\}\subset \C\cup \{\infty\}$ be the set of 
totally ramified values of $h$, where $a_j$'s are exceptional 
values. For each $a_j$, set $m_j=\infty$, and for each $b_j$, 
define $m_j$ to be the minimum of the multiplicities of $h$ at 
points $h^{-1}(b_j)$. Then we have $m_j\geq 2$.
We call 
\[
\delta_{h}=\sum_{a_j,b_j}\biggl(1-\dfrac1{m_j}\biggr)=r_0+\sum_{j=1}^{l_0}\biggl(1-\dfrac1{m_j}\biggr)
\]
the {\it totally ramified value number} of $h$. 
\end{definition}

Because the Lagrangian Gauss map $\nu$ of an improper affine front $\psi\colon \Sigma\to {\R}^{3}$ is 
a meromorphic function on $\Sigma$, we can consider the totally ramified value number ${\delta}_{\nu}$ of $\nu$. 
By virtue of Fact \ref{IA-fact2}, we regard $\Sigma$ as a punctured Riemann surface 
${\overline{\Sigma}}_{\gamma}\backslash \{p_{1},\ldots,p_{k}\}$, where ${\overline{\Sigma}}_{\gamma}$ is 
a closed Riemann surface of genus $\gamma$ and 
$p_{j}\in {\overline{\Sigma}}_{\gamma}$ $(j=1,\ldots, k)$. 
Then we give the upper bound for ${\delta}_{\nu}$ of complete improper affine fronts in ${\R}^{3}$. 
Here, we denote by $D_{\nu}$ the number of exceptional values 
of $\nu$. By definition, it follows immediately that $D_{\nu}\leq {\delta}_{\nu}$. 
\begin{theorem}\label{Thm2-1}
Let $\psi\colon \Sigma={\overline{\Sigma}}_{\gamma}\backslash \{p_{1},\ldots,p_{k}\}\to {\R}^{3}$ 
be a complete improper affine front and $\nu\colon \Sigma\to \C\cup \{\infty\}$ the Lagrangian Gauss map 
of $\psi$. 
Suppose that $\nu$ is nonconstant and $d$ is the degree of $\nu$ considered as a map on ${\overline{\Sigma}}_{\gamma}$.    Then we have 
\begin{equation}\label{IA-estimate}
D_{\nu}\leq {\delta}_{\nu}\leq 2+\dfrac{2}{R}, \qquad \dfrac{1}{R}:=\dfrac{\gamma-1+k/2}{d}<\dfrac{1}{2}\,.
\end{equation}
In particular, $D_{\nu}\leq {\delta}_{\nu}< 3$\,.
\end{theorem}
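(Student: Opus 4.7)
My plan is to combine the Riemann--Hurwitz formula for the extended Lagrangian Gauss map $\nu\colon \overline{\Sigma}_\gamma \to \C\cup\{\infty\}$ with a lower bound on the degree $d$ coming from completeness at the ends. The inequality $D_\nu \leq \delta_\nu$ is immediate from the definitions. For the upper bound on $\delta_\nu$, I would apply Riemann--Hurwitz to $\nu$, viewed as a degree-$d$ branched cover of the Riemann sphere, obtaining $\sum_P(e_P - 1) = 2\gamma - 2 + 2d$. For each totally ramified value $v$ with minimum multiplicity $m_v$ (with $m_v = \infty$ when $v$ is exceptional), the preimages of $v$ contribute at least $d(1 - 1/m_v)$ to the left-hand side. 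Summing over all totally ramified values gives $d\,\delta_\nu \leq 2\gamma - 2 + 2d$, and therefore
\[
\delta_\nu \leq 2 + \frac{2\gamma - 2}{d} \leq 2 + \frac{2\gamma - 2 + k}{d} = 2 + \frac{2}{R},
\]
using $k \geq 0$ for the middle inequality.

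The more delicate task is the strict inequality $1/R < 1/2$, equivalently $d > 2\gamma - 2 + k$. For this I would study the meromorphic 1-form $\phi := d(F+G) = dF + dG$. By construction and Fact~\ref{IA-fact2}, $\phi$ is holomorphic on $\Sigma$ and extends meromorphically to $\overline{\Sigma}_\gamma$ with poles only at $\{p_1, \dots, p_k\}$. The case $k = 0$ is excluded because a compact $\overline{\Sigma}_\gamma$ would force $F$ and $G$ to be constants, contradicting the regularity of the holomorphic curve $(F, G)$; hence $k \geq 1$. Completeness of $ds^2 = |\phi|^2$ near each $p_j$ forces $\phi$ to have a pole of order $I_j \geq 1$ there. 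Crucially, $\phi$ is exact as the differential of the meromorphic function $F + G$, so every residue of $\phi$ vanishes and $\phi$ cannot have a simple pole anywhere; therefore $I_j \geq 2$ for every $j$.

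To conclude I count degrees. Since $\deg \phi = 2\gamma - 2$, the total zero multiplicity of $\phi$ on $\overline{\Sigma}_\gamma$ equals $2\gamma - 2 + \sum_j I_j \geq 2\gamma - 2 + 2k$. Writing $\phi = (1+\nu)\,dG$ on $\Sigma$ and using that $(dF, dG)$ has no common zero, the zeros of $\phi$ on $\Sigma$ are precisely the preimages of $-1$ under $\nu$ counted with multiplicity; at the ends $\phi$ has poles, so it contributes no zeros there. Since the total multiplicity of $\nu^{-1}(-1)$ on $\overline{\Sigma}_\gamma$ equals $d$, we obtain $d \geq 2\gamma - 2 + 2k$, and since $k \geq 1$ this strengthens to $d > 2\gamma - 2 + k$, i.e., $1/R < 1/2$. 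Combined with the first step, this yields $D_\nu \leq \delta_\nu < 3$. The main obstacle is the residue-freeness step: it is the meromorphicity of $F$ and $G$ at the punctures that upgrades the completeness-induced $I_j \geq 1$ to $I_j \geq 2$, producing the factor of $2$ on the $k$-term in the degree estimate. Without this upgrade, one would only obtain $d \geq 2\gamma - 2 + k$, not the strict inequality required to push $2 + 2/R$ below $3$.
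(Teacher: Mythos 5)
Your first step contains a genuine error that invalidates the ramification estimate. You claim that an exceptional value $v$ of $\nu$ (so $m_v=\infty$) contributes at least $d(1-1/m_v)=d$ to the total branching $\sum_P(e_P-1)$ of the extended map on $\overline{\Sigma}_\gamma$. But the extension of $\nu$ to the compact surface is surjective: a value omitted on $\Sigma$ is attained at the punctures $p_1,\dots,p_k$, and its contribution to the branching is $d-\#\,\nu^{-1}(v)$, which is bounded below only by $d-k$, not by $d$. Consequently your intermediate inequality $\delta_\nu\le 2+(2\gamma-2)/d$ is false: the rotational improper affine front (Example \ref{IA-Ex2}) has $\gamma=0$, $k=2$, $d=2$ and $D_\nu=\delta_\nu=2$, whereas your bound would give $\delta_\nu\le 1$. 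The $+k$ in the numerator $2\gamma-2+k$ is not harmless slack to be inserted at the end; it is exactly the price of the fact that all preimage points of the omitted values sit among the $k$ ends. This is how the paper argues: it bounds the total number of preimage points of the $r_0$ omitted values by $k$ (inequality (\ref{eq2-1-5})), treats the non-exceptional totally ramified values as you do, and only then invokes Riemann--Hurwitz.

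Your second step also rests on a false identification. The flat fundamental form is $ds^2=\langle dx,dx\rangle=|dG+d\overline{F}|^2$, a real quadratic form mixing the $(1,0)$- and $(0,1)$-parts; it is not $|\phi|^2$ for the holomorphic one-form $\phi=dF+dG$ (its degenerate locus is the real curve $|\nu|=1$, not the discrete zero set of $\phi$). So completeness of $ds^2$ does not directly force $\phi$ to have poles at the ends, and without that your degree count for $\phi$ never gets started; the residue argument only excludes simple poles, it does not produce a pole. The paper instead uses completeness of the conformal metric $d\tau^2=2(1+|\nu|^2)|dG|^2$ (guaranteed by Fact \ref{IA-fact3}) to conclude that $dG$ has a pole of order $\mu_j\ge 1$ at each end, upgrades this to $\mu_j\ge 2$ by exactly your residue argument applied to the single-valued function $G$, and then counts the zeros of $dG$ --- which coincide with the poles of $\nu$ and have total order $d$ --- via $\deg(dG)=2\gamma-2$, giving $d=2\gamma-2+\sum_j\mu_j\ge 2\gamma-2+2k>2(\gamma-1+k/2)$. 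Your idea of counting $\nu^{-1}(-1)$ through the zeros of $\phi=(1+\nu)\,dG$ could in principle be made to work, but proving that $\phi$ has a pole of order at least $2$ at each end requires the information about $dG$ (and about the behaviour of $\nu$ at the ends) that the paper's argument supplies, so it is not a shortcut.
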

\begin{remark}
The geometric meaning of ``$2$'' in the upper bound of (\ref{IA-estimate}) is the Euler number of the Riemann sphere. 
The geometric meaning of the ratio $R$ is given in \cite[Section 6]{KKM}. 
\end{remark}
\begin{proof}
By Fact \ref{IA-fact3}, if $ds^{2}$ is complete, then $d{\tau}^{2}$ is a complete Riemannian metric. Then 
the metric $d{\tau}^{2}$ is represented as 
\begin{equation}\label{eq2-1-1}
{d\tau}^{2} =2(|dF|^{2}+|dG|^{2})=2\biggl( 1+\biggl|\frac{dF}{dG}{\biggr|}^{2} \biggr)|dG|^{2}=2(1+|\nu|^{2})|dG|^{2}\,.
\end{equation} 
Since $d{\tau}^{2}$ is nondegenerate on $\Sigma$, the poles of $\nu$ of order $k$ coincide exactly with 
the zeros of $dG$ of order $k$.  By the completeness of $d{\tau}^{2}$, 
$dG$ has a pole of order ${\mu}_{j}\geq 1$ at $p_{j}$ \cite{Os2}. Moreover we show that 
${\mu}_{j}\geq 2$ for each $p_{j}$ because $G$ is single-valued on ${\overline{\Sigma}}_{\gamma}$. 
Applying the Riemann-Roch theorem to $dG$ on ${\overline{\Sigma}}_{\gamma}$, we have 
\begin{equation}\label{eq2-1-2}
d-\displaystyle \sum_{j=1}^{k}{\mu}_{j}=2\gamma -2\,.
\end{equation}
Thus we get 
\begin{equation}\label{eq2-1-3}
d=2\gamma -2+\displaystyle \sum_{j=1}^{k}{\mu}_{j}\geq 2(\gamma -1+ k)> 2\biggl( \gamma -1+ \frac{k}{2} \biggr) \, ,
\end{equation}
and 
\begin{equation}\label{eq2-1-4}
\frac{1}{R} <\frac{1}{2}\,.
\end{equation} 

Assume that $\nu$ omits $r_{0}=D_{\nu}$ values. Let $n_{0}$ be the sum of the branching orders at the image of 
these exceptional values of $\nu$. Then we have 
\begin{equation}\label{eq2-1-5}
k\geq dr_{0}-n_{0}\,.
\end{equation}
Let $b_{1}, \ldots, b_{l_{0}}$ be the totally ramified values which are not exceptional values 
and $n_{r}$ the sum of branching order at the inverse image of $b_{i}$ $(i=1, \ldots, l_{0})$ of $\nu$. 
For each $b_{i}$, we denote 
\[
m_i=\text{min}_{{\nu}^{-1}(b_i)}\{\text{multiplicity of }\nu(z)=b_i\}\,,
\]
then the number of points in the inverse image ${\nu}^{-1}(b_{i})$ is less than or equal to $d/m_{i}$. 
Thus we get 
\begin{equation}\label{eq2-1-6}
dl_{0}-n_{r}\leq \displaystyle \sum_{i=1}^{l_{0}}\frac{d}{m_{i}}\, .
\end{equation} 
This implies 
\begin{equation}\label{eq2-1-7}
l_{0}- \displaystyle \sum_{i=1}^{l_{0}}\frac{1}{m_{i}} \leq \frac{n_{r}}{d}\, .
\end{equation}
Let $n_{\nu}$ be the total branching order of $\nu$ on ${\overline{\Sigma}}_{\gamma}$. 
Then applying the Riemann-Hurwitz formula to the meromorphic function $\nu$ on ${\overline{\Sigma}}_{\gamma}$, 
we have 
\begin{equation}\label{eq2-1-8}
n_{\nu}=2(d+\gamma -1)\,.
\end{equation} 
Hence we obtain 
\[
{\delta}_{\nu}=r_0+\sum_{j=1}^{l_0}\biggl(1-\dfrac1{m_j}\biggr)\leq \frac{n_{0}+k}{d} + \frac{n_{r}}{d}
\leq \frac{n_{\nu}+k}{d}=2+\dfrac{2}{R}\,.
\]
\end{proof}

The system of inequalities (\ref{IA-estimate}) is sharp in the following cases: 

(i) When $(\gamma, k, d)=(0, 1, n)$ $(n\in \N)$, we have 
\[
{\delta}_{\nu}\leq 2-\frac{1}{n}\,.
\] 
In this case, we can set $\Sigma =\C$. Since $\Sigma$ is simply connected, we have no period condition. 
We define a Weierstrass data on $\Sigma$, by 
\begin{equation}\label{w-data1}
(F, G)=\biggl(\frac{z^{n+1}}{n+1},\, z \biggr). 
\end{equation}
By Fact \ref{IA-fact1}, we can construct a complete improper affine front $\psi\colon \Sigma\to {\R}^{3}$ whose Weierstrass data 
is (\ref{w-data1}). In particular, its Lagrangian Gauss map $\nu$ has $\delta_{\nu}=2-(1/n)$. 
In fact, $\nu= z^{n}$, and it has one exceptional value and another totally ramified value of multiplicity $n$ at $z=0$. 
Thus (\ref{IA-estimate}) is sharp in this case. 

(ii) When $(\gamma, k, d)=(0, 2, 2)$, we have 
\[
D_{\nu}\leq {\delta}_{\nu}\leq 2\,.
\] 
In this case, we can set $\Sigma =\C\backslash \{0\}$. On the other hand, a rotational improper affine front 
(Example \ref{IA-Ex2}) has $D_{\nu}={\delta}_{\nu}=2$. 
Thus (\ref{IA-estimate}) is also sharp in this case. 

As a corollary of Theorem \ref{Thm2-1}, we obtain the maximal number of exceptional 
values of the Lagrangian Gauss map of complete improper affine fronts in ${\R}^{3}$. 
\begin{corollary}\label{Cor2-1}
Let $\psi$ be a complete improper affine front in ${\R}^{3}$. If its Lagrangian Gauss map $\nu$ is nonconstant, 
then $\nu$ can omit at most two values. 
\end{corollary}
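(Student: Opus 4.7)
The plan is to deduce this essentially immediately from Theorem~\ref{Thm2-1}, since the corollary is really a numerical consequence of the ramification estimate proved there. The only ingredients needed are (a) the strict inequality $1/R<1/2$ in Theorem~\ref{Thm2-1}, which forces $\delta_{\nu}<3$, and (b) the fact that $D_{\nu}$ is by definition a nonnegative integer.

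More concretely, I would argue as follows. Under the hypothesis that $\nu$ is nonconstant, Theorem~\ref{Thm2-1} applies and yields
\begin{equation*}
D_{\nu}\leq \delta_{\nu}\leq 2+\frac{2}{R}, \qquad \frac{1}{R}=\frac{\gamma-1+k/2}{d}<\frac{1}{2}.
\end{equation*}
Multiplying through, this gives $D_{\nu}\leq\delta_{\nu}<3$. Because $D_{\nu}$ counts a (finite) set of points in $\C\cup\{\infty\}$, it is a nonnegative integer, hence $D_{\nu}\leq 2$.

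For completeness of the exposition I would also note that the bound is attained: by Example~\ref{IA-Ex2}, the rotational improper affine front with Weierstrass data $(z,\pm r^{2}/z)$ on $\Sigma=\C\setminus\{0\}$ has Lagrangian Gauss map $\nu=\mp z^{2}/r^{2}$, which omits exactly the two values $0$ and $\infty$. Thus the number ``two'' in the statement is best possible, matching the second sharpness case $(\gamma,k,d)=(0,2,2)$ recorded after the proof of Theorem~\ref{Thm2-1}.

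There is no real obstacle here: all of the work has already been done in establishing Theorem~\ref{Thm2-1}, in particular the careful Riemann--Roch argument showing that each end forces $\mu_{j}\geq 2$ (so $1/R<1/2$ rather than merely $\leq$). The only thing to be a bit careful about is that the strict inequality $1/R<1/2$ really does come out of that argument, because it is precisely what upgrades the bound $\delta_{\nu}\leq 2+2/R$ to the conclusion $D_{\nu}\leq 2$ after rounding down to an integer.
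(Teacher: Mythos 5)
Your argument is correct and is exactly the paper's (implicit) reasoning: Theorem~\ref{Thm2-1} gives $D_{\nu}\leq\delta_{\nu}<3$, integrality of $D_{\nu}$ yields $D_{\nu}\leq 2$, and Example~\ref{IA-Ex2} shows sharpness. Nothing further is needed.
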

The number ``two'' is sharp, because the Lagrangian Gauss map of a rotational 
improper affine front (Example \ref{IA-Ex2}) omits two values. Hence we provide the best possible upper bound for $D_{\nu}$ in complete case. 

\section{The maximal number of exceptional values of the Lagrangian Gauss map of weakly complete improper affine fronts} 
In this section, we study the value distribution of the Lagrangian Gauss map of weakly complete 
improper affine fronts in ${\R}^{3}$. We begin to consider the case where the Lagrangian Gauss map is constant. 
\begin{proposition}\label{Pro3-1}
Let $\psi\colon \Sigma \to {\R}^{3}$ be a weakly complete improper affine front. 
If its Lagrangian Gauss map $\nu$ is constant, then $\psi$ is an elliptic paraboloid. 
\end{proposition}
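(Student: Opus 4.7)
The plan is to combine weak completeness with the Weierstrass-type representation (Fact~\ref{IA-fact1}) to force $\Sigma \cong \C$, and then identify the Weierstrass data---up to an affine transformation of $\R^{3}$---with that of Example~\ref{IA-Ex1}. I split into the two sub-cases $\nu \equiv c \in \C$ (finite constant) and $\nu \equiv \infty$.

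In the finite case, $\nu = dF/dG \equiv c$ gives $dF = c\,dG$ identically on $\Sigma$. Regularity of the holomorphic curve $\alpha = (F,G)$ then forces $dG$ to vanish nowhere (otherwise $dF$ would vanish simultaneously). By formula (\ref{eq1-3}) the induced metric is
\[
 d\tau^{2} \;=\; 2(|dF|^{2}+|dG|^{2}) \;=\; 2(1 + |c|^{2})\,|dG|^{2},
\]
which is a complete flat metric on $\Sigma$ by weak completeness. Consequently $G \colon \Sigma \to \C$ is a holomorphic map that is a local isometry (up to a positive constant factor) from a complete Riemannian surface onto the simply-connected flat plane. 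A standard Riemannian argument---a local isometry from a complete manifold into a simply-connected target of the same dimension is a global diffeomorphism---then shows $G$ is a biholomorphism onto $\C$.

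Take the global coordinate $z$ on $\Sigma \cong \C$ with $G(z) = z$; integrating $dF = c\,dG$ gives $F(z) = cz + a$ for some constant $a \in \C$. The additive constant $a$ is absorbed by an affine transformation of $\R^{3}$ (a translation in the $x$-plane together with a shear along the $x_{3}$-direction), and this transformation preserves the class of elliptic paraboloids. After this reduction the Weierstrass data is exactly $(cz, z)$, and by Example~\ref{IA-Ex1} the front $\psi$ is an elliptic paraboloid.

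The case $\nu \equiv \infty$ is symmetric: $dG \equiv 0$ forces $G$ to be constant and, by regularity, $dF$ is nowhere zero. The identical argument applied to $F$ shows that $F \colon \Sigma \to \C$ is a biholomorphism, so after a translation in $\R^{3}$ the data reduces to $(z, 0)$, which by direct substitution into Fact~\ref{IA-fact1} yields $\psi(z) = (\bar{z}, -|z|^{2}/2)$, an elliptic paraboloid opening in the negative $x_{3}$-direction. The most substantive step, common to both cases, is passing from weak completeness of $d\tau^{2}$ to the biholomorphism $\Sigma \cong \C$; once $\Sigma$ is identified with $\C$, the remainder is a routine bookkeeping of the affine degrees of freedom in the Weierstrass representation.
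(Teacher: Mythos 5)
Your proof is correct, but it takes a genuinely different route from the paper's. The paper argues via curvature and conformal type: since $d{\tau}^{2}=2(1+|\nu|^{2})|dG|^{2}$ is flat when $\nu$ is constant, Huber's theorem gives $\Sigma\cong{\overline{\Sigma}}_{\gamma}\backslash\{p_{1},\ldots,p_{k}\}$, and the total-curvature formula $0=-\chi({\overline{\Sigma}}_{\gamma})-\sum_{j}{\ord}_{p_{j}}(d{\tau}^{2})$ combined with ${\ord}_{p_{j}}(d{\tau}^{2})\leq -1$ forces $\gamma=0$ with either one end of order $-2$ or two ends of order $-1$; the two-ended case is then excluded because $F$ and $G$ are single-valued, and the order computation at the remaining end yields $dF=c\,dz$, $dG=dz$. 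You instead apply the covering-map theorem for local isometries directly to $G$ (resp.\ $F$ when $\nu\equiv\infty$): regularity of $\alpha=(F,G)$ makes $dG$ nowhere zero, $|dG|^{2}$ is a constant multiple of the complete metric $d{\tau}^{2}$, so $G$ is a local isometry from a complete surface into $\C$, hence a covering map onto $\C$, hence a biholomorphism. This bypasses Huber's theorem, the end-order bookkeeping, and the two-end case analysis entirely, and it treats $\nu\equiv\infty$ explicitly, a case the paper's proof glosses over (its assertion ${\ord}_{\infty}dG=-2$ tacitly assumes $dG\not\equiv 0$). What the paper's heavier machinery buys is uniformity: essentially the same Huber-plus-Minding argument is reused for Proposition \ref{prop4-4}, where the two-ended case genuinely occurs (the hyperbolic cylinder) and your covering-map shortcut would not apply as stated, since $\omega$ need not have a single-valued primitive. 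One caveat you share with the paper: Example \ref{IA-Ex1} notes that the data $(cz,z)$ with $|c|=1$ degenerates to a line rather than an elliptic paraboloid, and neither your argument nor the paper's addresses that boundary case.
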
 
\begin{proof}
Since the metric $d{\tau}^{2}$ is represented as (\ref{eq2-1-1}), if $\nu$ is constant,  
then the Gaussian curvature $K_{d{\tau}^{2}}$ of $d{\tau}^{2}$ vanishes identically on $\Sigma$. 
By the Huber theorem, $\Sigma$ is a closed Riemann surface of genus $\gamma$ with $k$ points removed, 
that is, $\Sigma={\overline{\Sigma}}_{\gamma}\backslash \{p_{1},\ldots,p_{k}\}$. 
Moreover we obtain the formula (see \cite[Corollary 1]{Fa} or \cite{Sh})
\[
\frac{1}{2\pi}\int_{\Sigma}(-K_{d\tau^{2}})dA=-\chi(\overline{\Sigma}_{\gamma})-
\sum_{j=1}^{k}{\ord}_{p_{j}}(d\tau^{2}), 
\] 
where $dA$ denotes the area element of $d{\tau}^{2}$ and $\chi(\overline{\Sigma}_{\gamma})$ 
the Euler number of $\overline{\Sigma}_{\gamma}$. 
Since the metric $d{\tau}^{2}$ is complete, 
${\ord}_{p_{j}}d{\tau}^{2}\leq -1$ holds for each end $p_{j}$. Thus if $\nu$ is constant, then we get $\gamma=0$ and 
\begin{equation}
\sum_{j=1}^{k}{\ord}_{p_{j}}(d\tau^{2})=-2\,.
\end{equation}
Since $d{\tau}^{2}$ is well-defined on $\Sigma$, we need to consider the following two cases: 
\begin{enumerate}
\item[(a)] The improper affine front $\psi$ has two ends $p$ and $q$, and ${\ord}_{p}{d\tau^{2}}={\ord}_{q}{d\tau^{2}}=-1$,
\item[(b)] The improper affine front $\psi$ has one end $p$, and ${\ord}_{p}{d\tau^{2}}=-2$\,. 
\end{enumerate}
In this class, the case (a) cannot occur because $F$ and $G$ are single-valued on $\Sigma$. 
Thus we have only to consider the case (b). Then we may assume that $p=\infty$ after a suitable M\"obius transformation 
of the Riemann sphere $\overline{\Sigma}_{0}$. Since $\nu$ is constant, $dF$ and $dG$ are well-defined on $\overline{\Sigma}_{0}$, 
and it holds that 
\[
{\ord}_{\infty}{dF}={\ord}_{\infty}{dG}=-2\,.
\]
We thus have $dF=cdz$ and $dG=dz$, that is, $F(z)=cz$ and $G(z)=z$ for some constant $c$. 
Therefore the result follows from Example \ref{IA-Ex1}. 
\end{proof}

We next give the best possible upper bound for the number of exceptional values of the Lagrangian Gauss map 
of weakly complete improper affine fronts in ${\R}^{3}$. 
\begin{theorem}\label{The3-2}
Let $\psi$ be a weakly complete improper affine front in ${\R}^{3}$. 
If its Lagrangian Gauss map $\nu$ is nonconstant, then $\nu$ can omit at most three values. 
\end{theorem}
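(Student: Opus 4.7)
The plan is to adapt the Fujimoto argument for the Gauss map of complete minimal surfaces in $\R^3$ to the present setting, tracking how the linear (rather than quadratic) dependence of the induced metric $d\tau^{2}=2(1+|\nu|^{2})|dG|^{2}$ on $(1+|\nu|^{2})$ should replace Fujimoto's bound of four by the sharp bound three.

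I would argue by contradiction. Suppose $\nu$ omits $q=4$ distinct values $\alpha_{1},\dots,\alpha_{4}\in\C\cup\{\infty\}$; by a M\"obius change of coordinates on the target sphere (with the case $\infty\in\{\alpha_{j}\}$ handled symmetrically) I may assume $\alpha_{j}\in\C$. Passing to the universal cover $\pi\colon\widetilde\Sigma\to\Sigma$, the pulled-back metric $\pi^{*}d\tau^{2}$ remains complete and $\tilde\nu:=\nu\circ\pi$ still omits the same four values. Uniformization combined with Picard's little theorem excludes $\widetilde\Sigma=\widehat\C$ (a nonconstant meromorphic function on $\widehat\C$ is surjective) and $\widetilde\Sigma=\C$ (at most two omitted values), leaving $\widetilde\Sigma\cong\mathbf{D}$.

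For the main step, I would fix $\eta\in(0,1)$ with $q(1-\eta)>2$ (possible precisely because $q=4$) and introduce on $\mathbf{D}$ the auxiliary conformal pseudo-metric
\[
d\hat\sigma^{2}=\left(\frac{|d\tilde G|}{\prod_{j=1}^{q}|\tilde\nu-\alpha_{j}|^{1-\eta}}\right)^{2/(q(1-\eta)-2)},
\]
defined away from the zeros of $d\tilde G$ and the ramification locus of $\tilde\nu$. A standard curvature computation, using $\partial\bar\partial\log|h|=0$ for any nonvanishing holomorphic $h$, shows that the Gaussian curvature of $d\hat\sigma^{2}$ is bounded above by a strictly negative constant, and $d\hat\sigma^{2}$ extends continuously (vanishing) to the excluded points. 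By the Ahlfors--Schwarz lemma, $d\hat\sigma^{2}$ is then dominated by a constant multiple of the Poincar\'e metric of $\mathbf{D}$, and hence has finite distance to $\partial\mathbf{D}$ along any divergent path $\Gamma$. A pointwise comparison between $d\tau$ and a suitable power of $d\hat\sigma$ (using that $|\tilde\nu-\alpha_{j}|$ is bounded above on any $\tilde\nu$-bounded portion of $\Gamma$, together with a direct estimate on the $\tilde\nu$-unbounded portion) yields $\int_{\Gamma}d\tau<+\infty$, contradicting completeness of $\pi^{*}d\tau^{2}$.

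The main obstacle is the precise construction of $d\hat\sigma^{2}$ in the middle step: the exponents must be tuned so that the Ricci-type computation produces a uniformly negative upper bound on the curvature while the resulting metric still admits a usable comparison with $d\tau$. The denominator $q(1-\eta)-2$ degenerates in the limit $q\to 3$, $\eta\to 0$, which is exactly the mechanism that pins the sharp constant at three rather than two or four. A secondary technical point is the extension of $d\hat\sigma^{2}$ across the zeros of $d\tilde G$ and the ramification points of $\tilde\nu$, where one must verify that no additional positive curvature is introduced; this is standard, but it is essential for the Ahlfors--Schwarz step to apply on all of $\mathbf{D}$.
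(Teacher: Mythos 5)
Your overall strategy (contradiction, reduction to the unit disk, an auxiliary conformal metric built from $dG$ and $\prod_j|\nu-\alpha_j|^{1-\eta}$, and a length estimate along a divergent path contradicting completeness of $d\tau^2$) is the right one and matches the paper's. But the key middle step fails as stated: the metric
$d\hat\sigma^{2}$ you define has conformal factor equal to a product of moduli of \emph{nonvanishing holomorphic} functions raised to real powers ($G'_z$ and $\nu-\alpha_j$, the latter zero-free precisely because the $\alpha_j$ are omitted), so its logarithm is harmonic and the metric is \emph{flat} wherever it is defined --- you even invoke $\partial\bar\partial\log|h|=0$ yourself. A flat metric cannot have Gaussian curvature bounded above by a strictly negative constant, and the Ahlfors--Schwarz lemma gives no domination of a flat metric by the Poincar\'e metric (e.g.\ $M|dz|^2$ on the disk for large $M$). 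To run a genuine negative-curvature argument the conformal factor must contain non-log-harmonic ingredients, namely the derivative $|\nu'_z|$ and factors of the type $(1+|\nu|^2)$ coming from the chordal distances $|\nu,\alpha_j|$; your metric contains neither, and the absence of $|\nu'_z|$ also blocks the final comparison with $d\tau=\sqrt{2(1+|\nu|^2)}\,|G'_z||dz|$.

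The paper in fact uses exactly the kind of flat metric you wrote down (with the extra factor $|\nu'_z|^{-2\lambda/(1-\lambda)}$ and the normalizing constants $\sqrt{1+|\alpha_j|^2}$), but exploits its flatness differently: by Fujimoto's Lemma 1.6.7 (Lemma \ref{Lem3-2}) one develops it isometrically onto a Euclidean disk $\Delta_R$ admitting a ray with divergent image, and the finiteness of $R$ is then forced by the separate pointwise inequality of Lemma \ref{Lem3-1} --- which is where the negative-curvature/Ahlfors--Schwarz input is really hidden. The choice $\lambda=1/(2-4\eta)\in(1/2,1)$ is what makes $\int(R/(R^2-|z|^2))^{\lambda}|dz|$ converge and pins the bound at three; your exponent $2/(q(1-\eta)-2)$ plays the analogous role but, without the development lemma or a correctly curved metric, there is nothing to apply it to. A secondary point: normalizing all four omitted values into $\C$ by a M\"obius transformation of the target is not innocuous here, since $d\tau^2=2(1+|\nu|^2)|dG|^2$ is not invariant under M\"obius changes of $\nu$ (poles of $\nu$ are tied to zeros of $dG$); the paper instead arranges $\alpha_4=\infty$ and treats the factor $1+|\nu|^2$ as $|\nu,\infty|^{-2}$.
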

The number ``three'' is sharp because there exist the following examples. 
\begin{example}[Improper affine fronts of Voss type]\label{Ex-Voss}
We consider the Lagrangian Gauss map $\nu$ and the holomorphic one-form $dG$ on $\Sigma ={\C}\backslash \{a_{1}, a_{2}\}$ 
for distinct points $a_{1}, a_{2}\in \C$, by
\begin{equation}\label{eq-Voss}
\nu=z,\quad dG=\dfrac{dz}{\prod_{j}(z-a_{j}) }\,.
\end{equation}
Since $F$ and $G$ are not well-defined on $\Sigma$, we obtain an improper affine front $\psi\colon \D\to {\R}^{3}$ on the universal covering disk $\D$ 
of $\Sigma$. Since the metric $d{\tau}^{2}$ is complete, we can get a weakly complete improper affine front whose 
Lagrangian Gauss map omits three values, $a_{1}$, $a_{2}$ and $\infty$.  
\end{example}

%\begin{remark}\label{RE-finite}
%By applying the same argument of complete case, we can show that the number of exceptional values of the Lagrangian Gauss map 
%of weakly complete improper affine spheres with finite total curvature with respect to $d{\tau}^{2}$ is at most two unless the map is constant. 
%This is also sharp because a rotational improper affine front (Example \ref{IA-Ex2}) is weakly complete. 
%\end{remark}

Before proceeding to the proof of Theorem \ref{The3-2}, we recall two function-theoretical lemmas. 
For two distinct values $\alpha$, $\beta\in \C\cup\{\infty\}$, we set 
\[
|\alpha, \beta|:=\dfrac{|\alpha -\beta|}{\sqrt{1+|\alpha|^{2}}\sqrt{1+|\beta|^{2}}}
\] 
if $\alpha\not= \infty$ and $\beta\not= 0$, and $|\alpha, \infty|=|\infty, \alpha|:=1/\sqrt{1+|\alpha|^{2}}$. 
Note that, if we take $v_{1}$, $v_{2}\in {\Si}^{2}$ with $\alpha =\varpi (v_{1})$ and $\beta =\varpi (v_{2})$, 
we have that $|\alpha, \beta|$ is a half of the chordal distance between $v_{1}$ and $v_{2}$, where $\varpi$ denotes 
the stereographic projection of ${\Si}^{2}$ onto $\C\cup\{\infty\}$. 

\begin{lemma}[{\cite[(8.12) in page 136]{Fu3}}] \label{Lem3-1}
Let $\nu$ be a nonconstant meromorphic function on ${\Delta}_{R} =\{z\in \C\, ;\, |z|<R\}$ $(0<R\leq +\infty)$ 
which omits $q$ values ${\alpha}_{1}, \ldots, {\alpha}_{q}$.  
If $q>2$, then for each positive $\eta$ with $\eta < (q-2)/q$, there exists some positive constant $C>0$ such that 
\begin{equation}\label{Fuji-1}
\dfrac{|{\nu}'|}{(1+|\nu|^{2}){\prod}_{j=1}^{q}|\nu ,{\alpha}_{j}|^{1-\eta}}
\leq C\dfrac{R}{R^{2}-|z|^{2}}\, .
\end{equation}
\end{lemma}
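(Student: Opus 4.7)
The plan is to deduce the inequality from the Ahlfors-Schwarz lemma applied to an auxiliary conformal pseudometric on $\Delta_R$ whose Gaussian curvature is bounded above by a negative constant. As a first step, define on $\Delta_R$ the nonnegative function
\[
\lambda(z) := \frac{|\nu'(z)|}{(1+|\nu(z)|^2)\,\prod_{j=1}^{q}|\nu(z),\alpha_j|^{1-\eta}},
\]
which is exactly the left-hand side of \eqref{Fuji-1}. After a preliminary Möbius rotation of the target Riemann sphere we may assume $\alpha_j\neq\infty$ for every $j$. Since $\nu$ omits each $\alpha_j$, and $\nu'$ has only isolated zeros, $\lambda$ is smooth and strictly positive on $\Delta_R$ outside the discrete set $E=\{z:\nu'(z)=0\}$, and it extends continuously by zero across $E$. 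I would regard $d\sigma := \lambda(z)\,|dz|$ as a (possibly singular) conformal pseudometric and analyze its curvature.

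The heart of the argument is the following computation. Using $\Delta\log|\nu'|=0$, $\Delta\log(1+|\nu|^2) = 4|\nu'|^2/(1+|\nu|^2)^2$, and the identity
\[
\Delta\log|\nu,\alpha_j| = -\frac{2|\nu'|^2}{(1+|\nu|^2)^2},
\]
a direct calculation gives
\[
\Delta\log\lambda \;=\; \frac{2\bigl[q(1-\eta)-2\bigr]\,|\nu'|^2}{(1+|\nu|^2)^2},
\]
which is \emph{strictly} positive precisely because $\eta<(q-2)/q$; hence $\log\lambda$ is strictly subharmonic on $\Delta_R\setminus E$.

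Next, I would exploit this subharmonicity to produce a pseudometric with genuinely negative curvature bounded away from zero. Following Fujimoto, I introduce a modified density of the form
\[
\tilde\lambda \;:=\; \lambda^{s}\cdot (1+|\nu|^2)^{-t},
\]
with exponents $s>0$ and $t\geq 0$ chosen (in terms of $q$ and $\eta$) so that $d\tilde\sigma := \tilde\lambda\,|dz|$ has Gaussian curvature satisfying a uniform upper bound $K_{d\tilde\sigma}\leq -C_0 < 0$ wherever it is smooth. The existence of admissible $(s,t)$ is exactly what the strict inequality $\eta<(q-2)/q$ buys: the slack $q(1-\eta)-2>0$ provides enough room to absorb the otherwise degenerate chordal factors $|\nu,\alpha_j|^{2s(1-\eta)}$ into terms bounded below on the relevant locus. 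Applying the generalized Ahlfors–Schwarz lemma (for an upper semicontinuous conformal pseudometric on the disk whose curvature is $\leq -C_0$) yields
\[
\tilde\lambda(z)\;\leq\; C_1\,\frac{R}{R^2-|z|^2},
\]
and translating this back to $\lambda$, then raising to the power $1/s$, produces the stated estimate \eqref{Fuji-1} with $C=C_1^{1/s}$.

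The principal obstacle is the curvature-trade step: the naive pseudometric $d\sigma$ already has negative Gauss curvature, namely $K_{d\sigma}=-2[q(1-\eta)-2]\prod_{j}|\nu,\alpha_j|^{2(1-\eta)}$, but this degenerates to zero on the loci where $\nu$ approaches one of the $\alpha_j$'s, so the classical Schwarz lemma cannot be applied to $d\sigma$ itself. Constructing $\tilde\lambda$ whose curvature is strictly bounded above by a negative constant, while still comparing favorably with $\lambda$, is the technical core of Fujimoto's argument and explains why the parameter $\eta$ and the strict inequality $\eta<(q-2)/q$ are indispensable.
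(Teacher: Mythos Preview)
The paper does not prove this lemma at all: it is quoted verbatim from Fujimoto's survey \cite[(8.12), p.~136]{Fu3} and used as a black box in the proof of Theorem~\ref{The3-2}. So there is no ``paper's own proof'' to compare with; your sketch should be measured against Fujimoto's argument.

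Your overall plan---pull back a conformal metric from the punctured sphere and apply the Ahlfors--Schwarz lemma---is exactly Fujimoto's strategy, and your curvature computation for the naive density $\lambda$ is correct. You also correctly identify the obstruction: the curvature $K_{d\sigma}=-2[q(1-\eta)-2]\prod_{j}|\nu,\alpha_j|^{2(1-\eta)}$ degenerates to~$0$ as $\nu$ approaches any $\alpha_j$, so the classical Schwarz lemma does not apply directly to $d\sigma$.

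The gap is in your remedy. Replacing $\lambda$ by $\tilde\lambda=\lambda^{s}(1+|\nu|^2)^{-t}$ does not cure the degeneration: near $\alpha_j$ the factor $(1+|\nu|^2)$ is merely a nonzero constant, so the curvature of $\tilde\lambda^2|dz|^2$ still carries the factor $\prod_j|\nu,\alpha_j|^{2s(1-\eta)}$ and still tends to~$0$. Your phrase ``absorb the degenerate chordal factors into terms bounded below'' has no concrete content here. Moreover, even if $\tilde\lambda\le C_1\,R/(R^2-|z|^2)$ held, ``translating back'' would give $\lambda\le C_1^{1/s}(1+|\nu|^2)^{t/s}\bigl(R/(R^2-|z|^2)\bigr)^{1/s}$, which is neither linear in the hyperbolic density nor free of the unwanted $(1+|\nu|^2)$ factor unless $s=1$, $t=0$.

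A clean way to close the argument is to work on the target: write $\lambda\,|dz|=\nu^{\ast}d\mu$, where $d\mu$ is the conformal metric on $\mathbf{P}^1\setminus\{\alpha_1,\dots,\alpha_q\}$ with density $\bigl[(1+|w|^2)\prod_j|w,\alpha_j|^{1-\eta}\bigr]^{-1}$. Because $\eta>0$, near each puncture $d\mu$ grows like $|w-\alpha_j|^{-(1-\eta)}$, which is \emph{strictly slower} than the Poincar\'e metric $d\mu_P$ of the $q$-punctured sphere (which grows like $\bigl[|w-\alpha_j|\log(1/|w-\alpha_j|)\bigr]^{-1}$). Hence $d\mu/d\mu_P$ extends continuously by zero across the punctures and is globally bounded: $d\mu\le C\,d\mu_P$. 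Since $d\mu_P$ has curvature $-1$, the ordinary Ahlfors--Schwarz lemma applied to $\nu\colon\Delta_R\to\mathbf{P}^1\setminus\{\alpha_j\}$ gives $\nu^{\ast}d\mu_P\le \dfrac{2R}{R^2-|z|^2}|dz|$, and combining the two inequalities yields \eqref{Fuji-1}. This is where the hypothesis $\eta>0$ (not only $\eta<(q-2)/q$) is actually used.
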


\begin{lemma}[{\cite[Lemma 1.6.7]{Fu2}}]\label{Lem3-2}
Let $d{\sigma}^{2}$ be a conformal flat metric on an open Riemann surface $\Sigma$. 
Then, for each point $p\in \Sigma$, there exists a local diffeomorphism $\Psi$ of a 
disk ${\Delta}_{R_{0}}=\{z\in \C\, ;\, |z|<{R}_{0}\}$ $(0<{R}_{0}\leq +\infty)$ onto an open neighborhood 
of $p$ with $\Psi (0)=p$ such that $\Psi$ is a local isometry, namely, the pull-back ${\Psi}^{\ast}(d{\sigma}^{2})$ 
is equal to the standard Euclidean metric $ds_{Euc}^{2}$ on  ${\Delta}_{R_{0}}$ and, for a point $a_{0}$ with $|a_{0}|=1$, 
the $\Psi$-image ${\Gamma}_{a_{0}}$ of the curve $L_{a_{0}}=\{w:=a_{0}s\, ;\, 0<s<R_{0}\}$ is divergent in $\Sigma$. 
\end{lemma}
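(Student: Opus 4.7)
The plan is to realize $\Psi$ as the developing map of the flat metric $d\sigma^{2}$ based at $p$, extended to a maximal Euclidean disk, and then to use maximality to produce the divergent direction.

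First I would produce the initial local isometry. Since $d\sigma^{2}$ is flat and conformal, in a holomorphic chart $(U,z)$ around $p$ one has $d\sigma^{2}=e^{2u}|dz|^{2}$ with $u$ harmonic; thus $e^{u}=|h|$ for a nowhere-vanishing holomorphic function $h$ on $U$. Setting $w=\int_{p}^{z}h(\zeta)\,d\zeta$ yields a biholomorphism of a neighborhood of $p$ onto a Euclidean disk $\{|w|<\delta\}$ that pulls $|dw|^{2}$ back to $d\sigma^{2}$. Its inverse is the sought-for local isometry $\Psi_{0}\colon \Delta_{\delta}\to U_{p}\subset\Sigma$ with $\Psi_{0}(0)=p$.

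Next I would enlarge the domain. Repeating the chart construction around each point of the image of $\Psi_{0}$ produces Euclidean $w$-charts whose transition functions on overlaps are rigid motions of $\C$, so continuations of $\Psi_{0}$ along paths in $\C$ glue unambiguously. Let $R_{0}\in(0,+\infty]$ be the supremum of radii $r$ for which $\Psi_{0}$ admits an extension to a local isometry $\Delta_{r}\to\Sigma$; taking the nested union of such extensions yields the desired $\Psi\colon\Delta_{R_{0}}\to\Sigma$, a local isometry with $\Psi(0)=p$.

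To exhibit a divergent direction I would argue by contradiction, assuming first that $R_{0}<+\infty$ and that for every $a_{0}$ with $|a_{0}|=1$ the curve $\Gamma_{a_{0}}=\Psi(L_{a_{0}})$ has a limit point $q_{a_{0}}\in\Sigma$ as $s\to R_{0}^{-}$. The chart constructed around $q_{a_{0}}$ as in the first step then glues onto $\Psi$ in a small $\C$-neighborhood of $R_{0}a_{0}$, extending $\Psi$ across the boundary point $R_{0}a_{0}$. Compactness of the circle $\{|w|=R_{0}\}$ yields a uniform $\varepsilon>0$ for which $\Psi$ extends to $\Delta_{R_{0}+\varepsilon}$, contradicting maximality of $R_{0}$; hence some $a_{0}$ must produce a divergent $\Gamma_{a_{0}}$. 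In the residual case $R_{0}=+\infty$, the map $\Psi\colon\C\to\Sigma$ is a local isometry from a complete source and thus a covering map; by the classification of quotients of $\C$ by discrete groups of Euclidean isometries together with the non-compactness of $\Sigma$, the target is $\C$, an infinite cylinder, or a M\"obius band, in each of which a divergent ray direction is immediate.

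The main obstacle I expect is verifying that the extended $\Psi$ is well defined on the star-shaped disk $\Delta_{R_{0}}$, i.e., that continuations of $\Psi_{0}$ along different paths from $0$ agree. This step rests on the monodromy principle on the simply connected $\Delta_{R_{0}}$ together with the rigidity of local isometries between Euclidean domains sharing a common initial chart. Once consistency is in hand, the compactness-versus-maximality argument handles $R_{0}<+\infty$ and the covering-space classification handles $R_{0}=+\infty$, closing the proof.
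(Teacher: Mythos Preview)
The paper does not prove this lemma; it is quoted from Fujimoto's monograph \cite[Lemma~1.6.7]{Fu2} and used as a black box in the proof of Theorem~\ref{The3-2}. There is therefore no in-paper argument to compare your attempt against.

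That said, your outline is the standard developing-map/maximal-radius argument and is essentially how the result is established in the cited source: produce a local flat coordinate at $p$, analytically continue it to the largest disk $\Delta_{R_{0}}$ on which a local isometry into $\Sigma$ exists, and use maximality to force some radial image to diverge. Two small points deserve tightening. In the case $R_{0}=+\infty$ you list the M\"obius band among the possible quotients, but $\Sigma$ is a Riemann surface and hence orientable; the only noncompact flat quotients of $\C$ by orientation-preserving Euclidean isometries acting freely are $\C$ itself and the cylinder, and in each a divergent radial direction is evident. In the case $R_{0}<+\infty$, passing from ``every $\Gamma_{a_{0}}$ has a limit $q_{a_{0}}$'' to a uniform extension radius via compactness of the boundary circle implicitly uses that $a_{0}\mapsto q_{a_{0}}$ is continuous; this follows from the Lipschitz estimate $d_{\sigma}(q_{a_{0}},q_{a_{0}'})\leq R_{0}\,|a_{0}-a_{0}'|$ obtained by comparing nearby rays, but is worth stating. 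With these clarifications the sketch is sound.
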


\begin{proof}[{\it Proof of Theorem \ref{The3-2}}]
This is proved by contradiction.  
Suppose that $\nu$ omits four distinct values ${\alpha}_{1}, \ldots, {\alpha}_{4}$. 
For our purpose, we may assume ${\alpha}_{4}=\infty$ and that 
$\Sigma$ is biholomorphic to the unit disk 
because $\Sigma$ can be replaced by its universal covering surface and Theorem \ref{The3-2} is obvious in the case 
where $\Sigma =\C$ by the little Picard theorem. 
We choose some $\eta$ with $0<\eta < 1/4$ and set ${\lambda}:=1/(2-4\eta)$. 
Then $1/2< \lambda <1$ holds. 
Now we define a new metric 
\begin{equation}\label{eq3:Thm3-2}
\displaystyle d{\sigma}^{2}=|{G}'_{z}|^{\frac{2}{1-\lambda}} \biggl( \frac{1}{|{\nu}'_{z}|} 
{\prod}_{j=1}^{3}\biggl( \dfrac{|\nu -{\alpha}_{j}|}{\sqrt{1+|{\alpha}_{j}|^2}}{\biggr)}^{1-\eta} 
\biggr)^{\frac{2\lambda}{1-\lambda}}|dz|^{2} 
\end{equation}
on the set ${\Sigma}':=\{z\in \Sigma\, ; \, {\nu}'_{z}(z)\not=0\}$, where $dG ={G}'_{z}dz$ and ${\nu}'_{z}=d\nu/dz$.  
Take a point $p\in {\Sigma}'$. Since the metric $d{\sigma}^{2}$ is flat on ${\Sigma}'$, 
by Lemma \ref{Lem3-2}, there exists a local isometry $\Psi$ satisfying $\Psi (0)=p$ from a disk ${\Delta}_{R}=\{z\in \C\, ;\, |z|<R\}$ 
$(0<R\leq +\infty)$ with the standard Euclidean metric onto an open neighborhood of $p$ in ${\Sigma}'$ 
with the metric $d{\sigma}^{2}$, such that, for a point $a_{0}$ with $|a_{0}|=1$, the $\Psi$-image ${\Gamma}_{a_{0}}$ of the 
curve $L_{a_{0}}=\{w:=a_{0}s\, ;\, 0\leq s<R \}$ is divergent in ${\Sigma}'$. 
For brevity, we denote the function $\nu\circ \Psi$ on $\Delta_{R}$ by $\nu$ in the followings. 
By Lemma \ref{Lem3-1}, we get 
\begin{equation}\label{eq4:Thm3-2}
\displaystyle R\leq C\frac{1+|\nu (0)|^{2}}{|{\nu}'_{z}(0)|}\prod_{j=1}^{4}|\nu (0),\, {\alpha}_{j}|^{1-\eta}<+\infty\,. 
\end{equation}
Hence 
\begin{equation}
L_{d\sigma}({\Gamma}_{a_{0}})=\int_{{\Gamma}_{a_{0}}}d\sigma= \int_{L_{a_{0}}}ds_{Euc}^{2}=R<+\infty\, ,
\end{equation}
where $L_{d\sigma}({\Gamma}_{a_{0}})$ denotes the length of ${\Gamma}_{a_{0}}$ with respect to the metric $d{\sigma}^{2}$. 

We assume that $\Psi$-image ${\Gamma}_{a_{0}}$ tends to a point $p_{0}\in \Sigma\backslash {\Sigma}'$ as $s\to R$. 
Taking a local complex coordinate $\zeta$ in a neighborhood of $p_{0}$ with $\zeta (p_{0})=0$, we can write 
$d{\sigma}^{2}=|\zeta|^{-2\lambda/(1-\lambda)} w|d\zeta|^{2}$ with some positive smooth 
function $w$.  Since $\lambda/(1-\lambda)>1$, we have 
\[
R=\int_{{\Gamma}_{a_{0}}} d\sigma \geq C'\int_{{\Gamma}_{a_{0}}}\dfrac{|d\zeta|}{|\zeta|^{\lambda/(1-\lambda)}}=+\infty
\]
which contradicts (\ref{eq4:Thm3-2}). Thus ${\Gamma}_{a_{0}}$ diverges outside any compact subset of $\Sigma$ as $s\to R$. 

On the other hand, since $d{\sigma}^{2}=|dz|^{2}$, we obtain by (\ref{eq3:Thm3-2}) 
\begin{equation}\label{eq5:Thm3-2}
\displaystyle |{G}'_{z}|=\biggl(|{\nu}'_{z}|\prod_{j=1}^{3}
\biggl(\frac{\sqrt{1+|{\alpha}_{j}|^{2}}}{|\nu -{\alpha}_{j}|} \biggr)^{1-\eta} \biggr)^{\lambda}\,.
\end{equation}
By Lemma \ref{Lem3-1}, we have 
\begin{eqnarray*}
{\Psi}^{\ast}d\tau &=& \sqrt{2}|{G}'_{z}|\sqrt{1+|\nu|^{2}}|dz| \\
                   &=& \sqrt{2}\displaystyle \biggl( |{\nu}'_{z}|(1+|\nu|^{2})^{1/2\lambda} \prod_{j=1}^{3}
                          \biggl( \frac{\sqrt{1+|{\alpha}_{j}|^{2}}}{|\nu -{\alpha}_{j}|}\biggr)^{1-\eta}\biggr)^{\lambda} |dz| \\
                   &=& \sqrt{2}\displaystyle \biggl( \dfrac{|{\nu}'_{z}|}{(1+|\nu|^{2})\prod_{j=1}^{4} |\nu, {\alpha}_{j}|^{1-\eta}} \biggr)^{\lambda}|dz|  \\
                   &\leq& \sqrt{2}C^{\lambda}\biggl(\dfrac{R}{R^{2}-|z|^{2}} \biggr)^{\lambda}|dz|
\end{eqnarray*}
Thus, if we denote the distance $d(p)$ from a point $p\in \Sigma$ to the boundary of $\Sigma$ 
as the greatest lower bound of the lengths with respect to the metric $d\tau^{2}$ of all divergent paths in $\Sigma$, then we have 
\[
d(p)\leq \int_{{\Gamma}_{a_{0}}} d\tau=\int_{L_{a_{0}}} {\Psi}^{\ast}d\tau= \sqrt{2}C^{\lambda}\int_{L_{a_{0}}} 
\biggl( \frac{R}{R^{2}-|z|^{2}}\biggr)^{\lambda}|dz|\leq \sqrt{2}C^{\lambda}\dfrac{R^{1-\lambda}}{1-\lambda} <+\infty 
\]
because $1/2 < \lambda < 1$. However, it contradicts the assumption that $d\tau^{2}$ is complete. 
\end{proof}

As a corollary of Theorem \ref{The3-2}, we provide a new and simple proof of the uniqueness theorem  
for affine complete improper affine spheres from the viewpoint of the value distribution property. 
\begin{corollary}\label{Cor2-2}
Any affine complete improper affine sphere must be an elliptic paraboloid. 
\end{corollary}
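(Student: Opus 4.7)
The strategy is to chain Theorem \ref{The3-2} with Proposition \ref{Pro3-1}: first upgrade affine completeness to weak completeness, then use the ramification bound to force the Lagrangian Gauss map to be constant, after which Proposition \ref{Pro3-1} identifies the surface as an elliptic paraboloid.

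To upgrade completeness, I would exploit that an improper affine sphere is a genuine immersion with no singular points, whereas the singular set of an improper affine front is precisely $\{|\nu|=1\}$, as recalled in Section~1. Hence $|\nu|\neq 1$ everywhere on $\Sigma$, and by continuity on the connected surface $\Sigma$ we may assume $|\nu|<1$ throughout (the opposite case is handled by swapping $F$ and $G$ in the Weierstrass data, which replaces $\nu$ by $1/\nu$). From (\ref{eq1-3}) one has $g=(1-|\nu|^{2})|dG|^{2}$ and $d\tau^{2}=2(1+|\nu|^{2})|dG|^{2}$, so that $d\tau^{2}\geq 2g$ pointwise as quadratic forms. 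The length of any divergent path with respect to $d\tau^{2}$ therefore dominates $\sqrt{2}$ times its length with respect to $g$, so completeness of the affine metric $g$ forces completeness of $d\tau^{2}$, that is, $\psi$ is weakly complete.

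With weak completeness in hand, the inequality $|\nu|<1$ means that $\nu$ omits every point of the uncountable set $\{w\in\C : |w|\geq 1\}\cup\{\infty\}$. Theorem \ref{The3-2} forbids a nonconstant Lagrangian Gauss map of a weakly complete improper affine front from omitting more than three values, so $\nu$ must be constant. Proposition \ref{Pro3-1} then identifies $\psi$ as an elliptic paraboloid; note that the exceptional case $|\nu|\equiv 1$ of Example \ref{IA-Ex1} is automatically excluded here because it would make $g=(1-|\nu|^{2})|dG|^{2}$ degenerate, contradicting the locally strongly convex condition that is part of being an affine sphere.

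The only step requiring genuine thought is the comparison $d\tau^{2}\geq 2g$ together with the observation that an affine sphere is singularity-free; once these are in place the corollary becomes, as the introduction advertises, the Liouville-type reduction of the parametric affine Bernstein problem, with the nontrivial content absorbed into Theorem \ref{The3-2}.
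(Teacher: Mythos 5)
Your proposal is correct and follows essentially the same route as the paper: the paper's proof likewise observes that the absence of singularities forces $|\nu|<1$ after possibly exchanging $dF$ and $dG$, compares $g=|dG|^{2}-|dF|^{2}<2(|dF|^{2}+|dG|^{2})=d\tau^{2}$ to deduce weak completeness from affine completeness, and then invokes Theorem \ref{The3-2} and Proposition \ref{Pro3-1}. Your write-up merely makes explicit the connectedness argument and the count of omitted values that the paper leaves implicit.
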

\begin{proof}
Because an improper affine sphere has no singularities, 
the complement of the image of its Lagrangian Gauss map $\nu$ contains at least the circle 
$\{|\nu|=1\}\subset\C\cup \{\infty\}$. Thus, by exchanging roles of $dF$ and $dG$ if necessarily, 
it holds that $|\nu|<1$ , that is, $|dF|< |dG|$. On the other hand, we have 
\[
g=|dG|^{2}-|dF|^{2}< 2(|dF|^{2}+|dG|^{2})=d{\tau}^{2}. 
\]
Thus if an improper affine sphere is affine complete, then it is also weakly complete. 
Hence, by Propotion \ref{Pro3-1} and Theorem \ref{The3-2}, it is an elliptic paraboloid. 
\end{proof}

\section{Value distribution of the ratio of canonical forms for weakly complete flat fronts 
in hyperbolic three-space}

We first summarize here definitions and basic facts on weakly complete flat fronts in ${\H}^{3}$ which 
we shall need. For more details, we refer the reader to \cite{GMM}, \cite{KRUY1}, \cite{KRUY2}, \cite{KUY2} 
and \cite{SUY}.  

%%% The hyperbolic 3-space %%%
Let ${\L}^{4}$ be the Lorentz-Minkowski four-space with inner product of signature $(-,+,+,+)$. 
Then the hyperbolic three-space is given by
\begin{equation}\label{hyperbolic-space}
{\H}^{3}=\{(x_{0}, x_{1}, x_{2}, x_{3})\in {\L}^{4} \, | \, -(x_{0})^{2}+(x_{1})^{2}+(x_{2})^{2}+(x_{3})^{2}=-1, x_{0}>0 \}
\end{equation}
with the induced metric from ${\L}^{4}$, which is a simply connected Riemannian three-manifold with constant 
sectional curvature $-1$. 
Identifying ${\L}^{4}$ with the set of $2\times 2$ Hermitian matrices 
Herm($2$)$=\{X^{\ast}=X\}$ $(X^{\ast}:=\tr{\overline{X}}\,)$ by
\begin{equation}\label{Hermite}
(x_{0}, x_{1}, x_{2}, x_{3}) \longleftrightarrow \left(
\begin{array}{cc}
x_{0}+x_{3} & x_{1}+ix_{2} \\
x_{1}-ix_{2} & x_{0}-x_{3}
\end{array}
\right)
\end{equation}
where $i=\sqrt{-1}$, we can write
\begin{eqnarray}\label{hyperbolicspace}
\H^{3}&=& \{X\in \text{Herm(2)}\,;\, \det{X}=1, \trace{X}>0\} \\
      &=& \{aa^{\ast}\,;\, a\in SL(2,\C)\} \nonumber
\end{eqnarray}
with the metric 
\[
\langle X, Y \rangle = -\frac{1}{2}\trace{(X\widetilde{Y})}, \quad  \langle X, X \rangle =-\det(X)\, ,
\]
where $\widetilde{Y}$ is the cofactor matrix of $Y$. The complex Lie group $ PSL(2,\C):= SL(2,\C)/\{\pm \text{id} \}$ 
acts isometrically on 
$\H^{3}$ by 
\begin{equation}\label{action}
\H^{3} \ni X \longmapsto aXa^{\ast}\, , 
\end{equation}
where $a\in PSL(2,\C)$. 

%%% The definition of fronts %%% 
Let $\Sigma$ be an oriented two-manifold. A smooth map $f\colon \Sigma\to {\H}^{3}$ is called a {\it front} 
if there exists a Legendrian immersion 
\[
L_{f}\colon \Sigma \to T_{1}^{\ast}{\H}^{3}
\]
into the unit cotangent bundle of ${\H}^{3}$ whose projection is $f$. 
Identifying $T_{1}^{\ast}{\H}^{3}$ with the unit tangent bundle $T_{1}{\H}^{3}$, 
we can write $L_{f}=(f, n)$, where $n (p)$ is a unit vector in $T_{f(p)}{\H}^{3}$ such that 
$\langle df(p), n (p) \rangle= 0$ for each $p\in M$. We call $n$ a {\it unit normal vector field} of the front $f$. 
A point $p\in\Sigma$ where $\rank{(df)}_{p}<2$ is called a {\it singularity} or {\it singular point}. 
A point which is not singular is called {\it regular point}, where the first fundamental form is positive definite.

%%% The definition of flat fronts %%%
The {\it parallel front} $f_{t}$ of a front $f$ at distance $t$ is given by $f_{t}(p)=\text{Exp}_{f(p)}(tn (p))$, 
where ``$\text{Exp}$'' denotes the exponential map of ${\H}^{3}$. 
In the model for ${\H}^{3}$ as in \eqref{hyperbolic-space}, we can write 
\begin{equation}\label{parallel}
f_{t}=(\cosh{t})f+(\sinh{t})n, \quad {n}_{t}=(\cosh{t})n +(\sinh{t})f\,,
\end{equation}
where ${n}_{t}$ is the unit normal vector field of $f_{t}$. 

Based on the fact that any parallel surface of a flat surface is also flat at regular points, 
we define flat fronts as follows: A front $f\colon \Sigma\to {\H}^{3}$ is called a {\it flat front} 
if, for each $p\in M$, there exists a real number $t\in \R$ such that the parallel front $f_{t}$ is a 
flat immersion at $p$. By definition, $\{f_{t}\}$ forms a family of flat fronts. 
We note that an equivalent definition of flat fronts is that the Gaussian curvature 
of $f$ vanishes at all regular points. However, there exists a case where this definition is not suitable. 
For details, see \cite[Remark 2.2]{KUY2}. 

%%% Complex representation %%%
We assume that $f$ is flat. Then there exists a (unique) complex structure on $\Sigma$ and 
a holomorphic Legendrian immersion 
\begin{equation}\label{Legen-lift}
{\E}_{f}\colon \widetilde{\Sigma}\to SL(2,\C)
\end{equation}
such that $f$ and $L_{f}$ are projections of ${\E}_{f}$, where $\widetilde{\Sigma}$ is 
the universal covering surface of $\Sigma$. 
Here, ${\E}_{f}$ being a holomorphic Legendrian map means that ${\E}^{-1}_{f}d{\E}_{f}$ is off-diagonal (see \cite{GMM}, \cite{KUY1}, 
\cite{KUY2}). 
We call ${\E}_{f}$ the {\it holomorphic Legendrian lift} of $f$. 
The map $f$ and its unit normal vector field $n$ are 
\begin{equation}\label{Legen-map-vec}
f={\E}_{f}{\E}^{\ast}_{f}, \quad n = {\E}_{f}e_{3}{\E}^{\ast}_{f}, \quad e_{3}=\left(
\begin{array}{cc}
1 & 0 \\
0 & -1
\end{array}
\right)\,.
\end{equation}

If we set 
\begin{equation}\label{Legen-form}
{\E}^{-1}_{f}d{\E}_{f}=\left(
\begin{array}{cc}
0      & \theta \\
\omega & 0
\end{array}
\right)\, ,
\end{equation}
the first and second fundamental forms $ds^{2}=\langle df, df \rangle$ and 
$dh^{2}=-\langle df, dn \rangle$ are given by
\begin{eqnarray}\label{Legen-form2}
ds^{2}&=&|\omega+\bar{\theta}|^{2}=Q+\bar{Q}+(|\omega|^{2}+|\theta|^{2}), \quad Q=\omega\theta \\
dh^{2}&=&|\theta|^{2}-|\omega|^{2} \nonumber 
\end{eqnarray}
for holomorphic one-forms $\omega$ and $\theta$ defined on $\widetilde{\Sigma}$, with $|\omega|^{2}$ and $|\theta|^{2}$ 
well-defined on $\Sigma$ itself. 
We call $\omega$ and $\theta$ the {\it canonical forms} of $f$. The holomorphic two-differential $Q$ appearing 
in the $(2,0)$-part of $ds^{2}$ is defined on $\Sigma$, and is called the {\it Hopf differential} of $f$. 
By definition, the umbilic points of $f$ coincide with the zeros of $Q$. Defining a meromorphic function on $\widetilde{\Sigma}$ 
by the ratio of canonical forms 
\begin{equation}\label{sign-rho}
\rho=\dfrac{\theta}{\omega}\,,
\end{equation}
then $|\rho|\colon \Sigma\to [0, +\infty]$ is well-defined on $\Sigma$, and $p\in \Sigma$ is a singular point 
if and only if $|\rho(p)|=1$. 

Note that the $(1, 1)$-part of the first fundamental form 
\begin{equation}\label{eq-Sasakian}
ds^{2}_{1,1}=|\omega|^{2}+|\theta|^{2}
\end{equation}
is positive definite on $\Sigma$ because it is the pull-back of the canonical Hermitian metric of $SL(2, \C)$. 
Moreover, $2ds^{2}_{1,1}$ coincides with the pull-back of the Sasakian metric on $T^{\ast}_{1}{\H}^{3}$ 
by the Legendrian lift $L_{f}$ of $f$ (which is the sum of the first and third fundamental forms in this case, 
see \cite[Section 2]{KUY2} for details). The complex structure on $\Sigma$ is compatible with the conformal 
metric $ds^{2}_{1,1}$. Note that any flat front is orientable (\cite[Theorem B]{KRUY1}). 
In this section, for each flat front $f\colon \Sigma\to {\H}^{3}$, 
we always regard $\Sigma$ as a Riemann surface with this complex structure. 

%% The hyperbolic Gauss maps %%%
The two {\it hyperbolic Gauss maps} are defined by 
\begin{equation}\label{def-Gauss-map}
G=\dfrac{E_{11}}{E_{21}},\quad G_{\ast}=\dfrac{E_{12}}{E_{22}},\quad  \text{where}\quad  {\E}_{f}=(E_{ij})\,.
\end{equation}
By identifying the ideal boundary ${\Si}^{2}_{\infty}$ of ${\H}^{3}$ with the Riemann sphere ${\C}\cup\{\infty\}$, 
the geometric meaning of $G$ and $G_{\ast}$ is given as follows (\cite{GMM}, \cite[Appendix A]{KRUY2}, \cite{Ro}): 
The hyperbolic Gauss maps $G$ and $G_{\ast}$ represent the intersection points in ${\Si}^{2}_{\infty}$
for the two oppositely-oriented normal geodesics emanating from $f$. 
In particular, $G$ and $G_{\ast}$ are meromorphic functions on $\Sigma$ and parallel fronts have the same hyperbolic 
Gauss maps. We have already obtained an estimate for the totally ramified value numbers of the hyperbolic Gauss maps of 
complete flat fronts in ${\H}^{3}$ in \cite{Ka4}. This estimate is similar to the case of the Gauss map of 
pseudo-algebraic minimal surfaces in Euclidean four-space (see \cite{Ka2}). 
Let $z$ be a local complex coordinate on $\Sigma$. Then we have the following identities (see \cite{KUY2}): 
\begin{equation}\label{Schwarz-i}
s(\omega)-S(G)=2Q,\quad s(\theta)-S(G_{\ast})=2Q, 
\end{equation}
where $S(G)$ is the Schwarzian derivative of $G$ with respect to $z$ as in 
\begin{equation}\label{Schwarz-d}
S(G)=\biggl\{\biggl(\frac{G''}{G'}{\biggr)}'-\frac{1}{2} \biggl(\frac{G''}{G'}{\biggr)}^{2}\biggr\}dz^{2} \qquad 
\biggl(\, '=\frac{d}{dz}\biggr)\, , 
\end{equation}
and $s(\omega)$ and $s(\theta)$ is the Schwarzian derivative of the integral of $\omega$ and $\theta$, respectively.  

%%% interchange %%%
Here, we note on the interchangeability of the canonical forms and the hyperbolic Gauss maps. 
The canonical forms $(\omega, \theta)$ have 
the $U(1)$-ambiguity $(\omega, \theta)\mapsto (e^{is}\omega, e^{-is}\theta)\,(s\in \R),$ which corresponds to 
\begin{equation}\label{equ-U(1)}
{\E}_{f}\longmapsto {\E}_{f}\left(
\begin{array}{cc}
e^{is/2} & 0 \\
0 & e^{-is/2}
\end{array}
\right).\, 
\end{equation}
For a second ambiguity, defining the {\it dual} ${\E}_{f}^{\natural}$ of ${\E}_{f}$ by 
\[
{\E}_{f}^{\natural}={\E}_{f}\left(
\begin{array}{cc}
0 & i \\
i & 0
\end{array}
\right),
\] 
then ${\E}_{f}^{\natural}$ is also Legendrian with $f={\E}_{f}^{\natural}{{\E}_{f}^{\natural}}^{\ast}$. 
The hyperbolic Gauss maps $G^{\natural}$, $G_{\ast}^{\natural}$ and canonical forms ${\omega}^{\natural}$, 
${\theta}^{\natural}$ of ${\E}_{f}^{\natural}$ satisfy 
\[
G^{\natural}=G_{\ast}, \quad G_{\ast}^{\natural}=G, \quad {\omega}^{\natural}=\theta, \quad {\theta}^{\natural}=\omega\,.
\]
Namely, the operation $\natural$ interchanges the roles of $\omega$ and $\theta$ and also $G$ and $G_{\ast}$. 

%%% completeness and weakly completeness %%%
A flat front $f\colon \Sigma \to {\H}^{3}$ is said to be {\it weakly complete} (resp. {\it of finite type}) if 
the metric $ds^{2}_{1,1}$ as in (\ref{eq-Sasakian}) is complete (resp. of finite total curvature). We note that 
the universal cover of a weakly complete flat front is also weakly complete, but completeness is not preserved 
when lifting to the universal cover. 

\begin{fact}[{\cite[Proposition 3.2]{KRUY1}}]\label{WCF-1}
If a flat front $f\colon \Sigma \to {\H}^{3}$ is weakly complete and of finite type, 
then $\Sigma$ is biholomorphic to ${\overline{\Sigma}}_{\gamma}\backslash \{p_{1},\ldots,p_{k}\}$, 
where ${\overline{\Sigma}}_{\gamma}$ is a closed Riemann surface of genus $\gamma$ and $p_{j}\in {\overline{\Sigma}}_{\gamma}$ 
$(j=1,\ldots,k)$. 
\end{fact}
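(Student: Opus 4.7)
The plan is to apply Huber's theorem directly to the conformal metric $ds^{2}_{1,1} = |\omega|^{2}+|\theta|^{2}$ introduced in \eqref{eq-Sasakian}. Recall that this metric is positive definite on $\Sigma$ and is compatible with the complex structure that we have declared on $\Sigma$, so $(\Sigma, ds^{2}_{1,1})$ is a Riemannian surface whose conformal class is precisely the given Riemann surface structure. Consequently, any conformal classification of the underlying surface obtained from $ds^{2}_{1,1}$ is automatically a biholomorphic classification of $\Sigma$.

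First I would unpack the two hypotheses in terms of $ds^{2}_{1,1}$. Weak completeness is, by definition, completeness of $ds^{2}_{1,1}$ as a Riemannian metric on $\Sigma$. The finite-type hypothesis is that $ds^{2}_{1,1}$ has finite total absolute curvature, i.e.
\begin{equation*}
\int_{\Sigma}\,|K_{ds^{2}_{1,1}}|\,dA_{ds^{2}_{1,1}}<+\infty,
\end{equation*}
where $K_{ds^{2}_{1,1}}$ and $dA_{ds^{2}_{1,1}}$ denote, respectively, the Gaussian curvature and area element of $ds^{2}_{1,1}$. Once these two properties are recorded, the rest of the argument is essentially automatic: Huber's theorem states that any complete, finitely connected Riemannian surface of finite total absolute curvature is conformally equivalent to a closed Riemann surface with a finite number of points removed. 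Applying this to $(\Sigma, ds^{2}_{1,1})$ yields a closed Riemann surface $\overline{\Sigma}_{\gamma}$ of some genus $\gamma$ and finitely many points $p_{1},\ldots,p_{k}\in \overline{\Sigma}_{\gamma}$ such that $\Sigma$ is biholomorphic to $\overline{\Sigma}_{\gamma}\setminus\{p_{1},\ldots,p_{k}\}$.

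The main technical point I would need to justify carefully is the passage from the assumption ``$f$ is of finite type'' to the hypotheses of Huber's theorem. Concretely, I need $\Sigma$ to be finitely connected and $ds^{2}_{1,1}$ to have finite total absolute (not merely total) curvature. The former is typically deduced from the latter via a Cohn-Vossen type estimate once completeness is known; the latter should follow from the definition adopted in the references \cite{KRUY1, KUY2}, where ``finite type'' is formulated in exactly this way, and one checks that $|K_{ds^{2}_{1,1}}|\,dA_{ds^{2}_{1,1}}$ is bounded by the curvature quantity used to define finite type (for flat fronts, the curvature of $ds^{2}_{1,1}$ can be expressed in terms of $\omega$, $\theta$, and the Hopf differential $Q=\omega\theta$, so this bound is a direct local computation). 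This verification is the only real obstacle; after it is in place, the conclusion is immediate from Huber's theorem.
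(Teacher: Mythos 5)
The paper offers no proof of this statement to compare against: it is imported as a Fact, quoted verbatim from \cite[Proposition 3.2]{KRUY1}. Judged on its own terms, your route --- Huber's theorem applied to the complete conformal metric $ds^{2}_{1,1}=|\omega|^{2}+|\theta|^{2}$, whose conformal class is by construction the complex structure of $\Sigma$ --- is the standard argument and is, in substance, how the cited reference proceeds. The one point you flag as the ``only real obstacle,'' namely passing from ``finite type'' to the integrability hypothesis of Huber's theorem, is in fact immediate in this setting: writing $ds^{2}_{1,1}=(1+|\rho|^{2})\,|\omega|^{2}$ with $\rho=\theta/\omega$ and $\omega=\hat{\omega}\,dz$ locally, the same computation as for the induced metric of a minimal surface gives
\[
K_{ds^{2}_{1,1}}=-\frac{2\,|\rho_{z}|^{2}}{(1+|\rho|^{2})^{3}\,|\hat{\omega}|^{2}}\leq 0,
\qquad \rho_{z}:=\frac{d\rho}{dz},
\]
so the total curvature, the total absolute curvature, and the integral of the negative part of the curvature all coincide, and ``finite type'' as defined in the paper is exactly the hypothesis Huber needs. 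Also, you do not need a separate Cohn--Vossen step to establish finite connectivity: Huber's theorem takes only completeness and $\int_{\Sigma}K^{-}\,dA<+\infty$ as input and delivers finite topology together with the conformal identification of $\Sigma$ with ${\overline{\Sigma}}_{\gamma}\backslash\{p_{1},\ldots,p_{k}\}$ as output. With these two remarks your argument is complete and faithful to the source.
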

Each puncture point $p_{j}$ $(j=1,\ldots,k)$ is called an {\it WCF-end} of $f$. We can assume that a 
neighborhood of $p_{j}$ is biholomorphic to the punctured disk 
$\D^{\ast}=\{z\in\C\,;\, 0<|z|<1\}$. 
\begin{fact}[\cite{GMM}, \cite{KUY2}, {\cite[Proposition 3.2]{KRUY1}}]\label{WCF-2}
Let $f\colon \D^{\ast}\to {\H}^{3}$ be a WCF-end of a flat front. Then the canonical forms $\omega$ and $\theta$ 
are expressed 
\[
\omega=z^{\mu}{\hat{\omega}}(z)dz,\quad \theta=z^{{\mu}_{\ast}}{\hat{\theta}}(z)dz, \quad (\mu, {\mu}_{\ast}\in \R,\, 
{\mu}+{\mu}_{\ast}\in \Z),
\]
where $\hat{\omega}$ and $\hat{\theta}$ are holomorphic functions in $z$ which do not vanish at the origin. 
In particular, the function $|\rho|\colon \D^{\ast}\to [0,\infty]$ as in (\ref{sign-rho}) can be extended across 
the end. 
\end{fact}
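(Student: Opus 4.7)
The plan is to analyze $\omega$ and $\theta$ via their monodromy around the puncture $z=0$ and then use the weak-completeness plus finite-type hypotheses to promote the resulting single-valued parts to germs that are holomorphic and nonvanishing at the origin. Although $\omega$ and $\theta$ a priori live on the universal cover of $\D^{\ast}$, the forms $|\omega|^{2}$ and $|\theta|^{2}$ descend to $\D^{\ast}$ by the very definition of canonical forms, so the deck transformation corresponding to one loop around $z=0$ acts on $\omega$ (resp.\ $\theta$) by multiplication by a unimodular complex number $e^{-2\pi i\mu}$ (resp.\ $e^{-2\pi i\mu_{\ast}}$) with $\mu,\mu_{\ast}\in\R$. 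Peeling off these monodromy factors (for instance via the substitution $z=e^{2\pi i\zeta}$ on the universal cover) yields
\[
\omega = z^{\mu}\,h_{1}(z)\,dz, \qquad \theta = z^{\mu_{\ast}}\,h_{2}(z)\,dz,
\]
where $h_{1}$ and $h_{2}$ are single-valued holomorphic functions on $\D^{\ast}$. The integrality $\mu+\mu_{\ast}\in\Z$ then follows immediately from the fact that the Hopf differential $Q=\omega\theta=z^{\mu+\mu_{\ast}}h_{1}(z)h_{2}(z)\,dz^{2}$ is single-valued on $\D^{\ast}$ by construction.

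Next I would upgrade $h_{1}$ and $h_{2}$ from single-valued holomorphic functions on $\D^{\ast}$ to germs that are meromorphic at $z=0$. This is where the WCF hypothesis enters decisively: by Fact~\ref{WCF-1} the conformal metric $ds^{2}_{1,1}=|\omega|^{2}+|\theta|^{2}$ is complete and of finite total curvature on the end. A Huber/Osserman-type analysis, adapted to the flat-front setting as in \cite[Proposition~3.2]{KRUY1} and \cite{KUY2}, rules out an essential singularity at the puncture for either $h_{i}$. Once meromorphicity is established, factoring the order of $h_{i}$ at $0$ out of $h_{i}$ and absorbing it into the real exponent---an integer shift of $\mu$ and $\mu_{\ast}$ that preserves $\mu+\mu_{\ast}\in\Z$---produces the required normalization with $\hat{\omega}(0)\neq 0$ and $\hat{\theta}(0)\neq 0$.

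The final assertion about $|\rho|$ is then immediate: from the normal forms one reads off
\[
|\rho(z)| = |z|^{\mu_{\ast}-\mu}\,\frac{|\hat{\theta}(z)|}{|\hat{\omega}(z)|},
\]
and since $\hat{\omega}$ and $\hat{\theta}$ are holomorphic and nonzero at $0$, the right-hand side extends continuously to $z=0$ as an element of $[0,+\infty]$, with limit $0$, a positive real number, or $+\infty$ according as $\mu_{\ast}-\mu$ is positive, zero, or negative. I expect the main obstacle to be the meromorphic-extension step: the monodromy reduction and the Hopf-differential identity are essentially formal, but ruling out essential singularities in $h_{1}$ and $h_{2}$ at $z=0$ genuinely requires the completeness-plus-finite-total-curvature analysis of $ds^{2}_{1,1}$ at the end rather than any purely complex-analytic formalism.
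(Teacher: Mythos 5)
The paper does not prove this statement: it is quoted as a Fact from \cite{GMM}, \cite{KUY2} and \cite[Proposition~3.2]{KRUY1}, so there is no in-paper argument to compare against. Judged on its own, your outline reproduces the standard proof from those references and its formal steps are correct: since $|\omega|^{2}$ and $|\theta|^{2}$ descend to $\D^{\ast}$ while $\omega,\theta$ live on the universal cover, the generator of the deck group multiplies each form by a unimodular constant, which is exactly what lets you peel off real powers $z^{\mu}$, $z^{\mu_{\ast}}$ and leave single-valued holomorphic $h_{1},h_{2}$ on $\D^{\ast}$; and the integrality $\mu+\mu_{\ast}\in\Z$ does follow from the single-valuedness of the Hopf differential $Q=\omega\theta$ on $\Sigma$. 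The concluding statement about $|\rho|$ is also an immediate consequence of the normal forms. Two caveats. First, the decisive step --- that $h_{1}$ and $h_{2}$ extend meromorphically to $z=0$ (equivalently, have neither an essential singularity nor zeros accumulating at the puncture) --- is asserted by citation rather than argued; this is precisely the content of \cite[Proposition~3.2]{KRUY1}, which uses both completeness and finiteness of the total curvature of $ds^{2}_{1,1}$ (the latter controls $\int |d\rho|^{2}/(1+|\rho|^{2})^{2}$, i.e.\ the spherical area swept by $\rho$, since $K_{ds^{2}_{1,1}}\le 0$), so as a self-contained proof your write-up is an outline of the right shape rather than a complete argument. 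Second, a small misattribution: completeness and finite total curvature of $ds^{2}_{1,1}$ are the hypotheses built into the definition of a WCF-end (weak completeness plus finite type), not consequences of Fact~\ref{WCF-1}, which only gives the conformal type of $\Sigma$. Neither point is a substantive error in the approach.
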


Here, $|\omega|^{2}$ and $|\theta|^{2}$ are considered as conformal flat metrics on $\D^{\ast}_{\varepsilon}$ for 
sufficiently small $\varepsilon >0$. The real numbers $\mu$ and ${\mu}_{\ast}$ are the order of the metrics 
$|\omega|^{2}$ and $|\theta|^{2}$ at the origin respectively, that is, 
\begin{equation}\label{order1}
\mu={\ord}_{0}{|\omega|^{2}}, \quad {\mu}_{\ast}={\ord}_{0}{|\theta|^{2}}. 
\end{equation}
Since $ds^{2}_{1,1}=|\omega|^{2}+|\theta|^{2}$ is complete at the origin, it holds that 
\begin{equation}\label{order2}
{\text{min}}\{\mu, {\mu}_{\ast}\}={\text{min}}\biggl\{ {\ord}_{0}{|\omega|^{2}}, {\ord}_{0}{|\theta|^{2}}\biggr\}
\leq 1\,.
\end{equation}
for a WCF-end. By (\ref{Legen-form2}), the order of the Hopf differential is 
\begin{equation}\label{ord3}
{\ord}_{0}Q=\mu +{\mu}_{\ast}={\ord}_{0}{|\omega|^{2}}+{\ord}_{0}{|\theta|^{2}}. 
\end{equation}

We call the WCF-end {\it regular} if both $G$ and $G_{\ast}$ have at most poles. Then the following fact holds. 
\begin{fact}[\cite{GMM}, {\cite[Proposition 4.2]{KRUY1}}]\label{WCF-3}
A WCF-end $f\colon \D^{\ast}\to {\H}^{3}$ of a flat front is regular if and only if the Hopf differential has 
a pole of order at most two at the origin, that is, ${\ord}_{0}Q\geq -2$ holds. 
\end{fact}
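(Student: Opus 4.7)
The plan is to exploit the Schwarzian identities (\ref{Schwarz-i}), rewritten as $2Q = s(\omega)-S(G)=s(\theta)-S(G_\ast)$, by comparing the orders of each term at the end. By Fact \ref{WCF-2}, near the puncture we have $\omega = z^{\mu}\hat{\omega}(z)\,dz$ and $\theta = z^{\mu_\ast}\hat{\theta}(z)\,dz$ with $\hat{\omega}(0),\hat{\theta}(0)\neq 0$, so $Q=\omega\theta = z^{\mu+\mu_\ast}\hat\omega\hat\theta\,dz^{2}$ and hence $\ord_{0}Q = \mu+\mu_\ast$. Setting $u=\int\omega$ gives $u''/u' = \mu/z + \hat\omega'/\hat\omega$, and a direct computation yields
\[
s(\omega) \;=\; -\frac{\mu(\mu+2)}{2z^{2}}\,dz^{2} + O(z^{-1})\,dz^{2},
\]
so $\ord_{0}s(\omega)\geq -2$, and symmetrically $\ord_{0}s(\theta)\geq -2$.

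For the forward direction, suppose the WCF-end is regular, so that $G$ and $G_\ast$ are meromorphic at the origin. Since the Schwarzian derivative of any meromorphic function has a pole of order at most $2$ at each of its zeros and poles, $\ord_{0}S(G)\geq -2$. Combining with the identity $2Q = s(\omega)-S(G)$ and the bound above for $s(\omega)$ gives $\ord_{0}Q\geq -2$.

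For the converse, assume $\ord_{0}Q\geq -2$. Rather than trying to invert the Schwarzian abstractly, I would work directly with the Legendrian system (\ref{Legen-form}): eliminating $E_{12}$ from $dE_{11}=E_{12}\omega$, $dE_{12}=E_{11}\theta$ (and similarly for the second row) shows that $E_{11}$ and $E_{21}$ are both solutions of the second-order linear ODE
\[
v'' - \biggl(\frac{\mu}{z}+\frac{\hat\omega'}{\hat\omega}\biggr) v' - z^{\mu+\mu_\ast}\hat\omega\hat\theta\, v \;=\; 0.
\]
The Fuchs criterion (holomorphy of $zp(z)$ and $z^{2}q(z)$) is satisfied precisely when $\mu+\mu_\ast\geq -2$, i.e., the origin becomes a regular singular point exactly when $\ord_{0}Q\geq -2$. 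Frobenius theory then provides a fundamental system of the form $\{z^{\rho_{1}}h_{1}(z),\, z^{\rho_{2}}h_{2}(z)\}$ with $h_{i}$ holomorphic and nonvanishing at $0$, possibly with a logarithmic correction when $\rho_{1}-\rho_{2}\in\Z$. The extra input is that $G = E_{11}/E_{21}$ is already single-valued on the punctured disk $\D^{\ast}$: this monodromy constraint, together with $\mu+\mu_\ast\in\Z$ from Fact \ref{WCF-2}, forces $\rho_{1}-\rho_{2}\in\Z$ and rules out the logarithmic term, and hence $G$ extends meromorphically across $0$. The same argument, applied to the second row of $\E_{f}$ or to the dual Legendrian lift $\E_{f}^{\natural}$ (which interchanges $G$ and $G_\ast$), handles $G_\ast$.

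The main obstacle is this last step of the converse, namely turning the Fuchsian structure at $0$ together with the single-valuedness of $G$ on $\D^{\ast}$ into a genuine meromorphic extension by excluding logarithmic terms in the Frobenius expansion; everything else reduces to order-counting via (\ref{Schwarz-i}) and the local formula for $s(\omega)$.
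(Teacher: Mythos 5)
This statement is quoted in the paper as a \emph{Fact} with references to \cite{GMM} and \cite[Proposition 4.2]{KRUY1}; the paper itself contains no proof, so there is nothing internal to compare against. Your reconstruction is, as far as I can check, correct and follows the standard route of the cited sources: order-counting in the Schwarzian identity (\ref{Schwarz-i}) for the ``only if'' direction, and the Fuchs/Frobenius analysis of the scalar ODE satisfied by $E_{11}$ and $E_{21}$ for the ``if'' direction. Two details are worth making explicit. First, both directions quietly need $E_{11}$ and $E_{21}$ to be linearly independent solutions (equivalently, $G$ nonconstant, so that $S(G)$ is defined); this follows from the Wronskian computation $E_{11}E_{21}'-E_{11}'E_{21}=(\omega/dz)\det\E_{f}=z^{\mu}\hat\omega\neq 0$ on $\D^{\ast}$. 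Second, the key monodromy step should be phrased as: single-valuedness of $G=E_{11}/E_{21}$ on $\D^{\ast}$ means the monodromy of the ODE acts trivially on the projectivized solution space, i.e.\ the monodromy matrix is scalar; a scalar matrix has equal eigenvalues $e^{2\pi i\rho_{1}}=e^{2\pi i\rho_{2}}$ (forcing $\rho_{1}-\rho_{2}\in\Z$) and is diagonalizable (excluding the logarithmic Jordan block), which is exactly what you need. The role of $\mu+\mu_{\ast}\in\Z$ is to make the coefficient $z^{\mu+\mu_{\ast}}\hat\omega\hat\theta$ single-valued so that the monodromy discussion makes sense at all, rather than to force $\rho_{1}-\rho_{2}\in\Z$ directly. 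With these points spelled out, the argument is complete.
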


%%% the case where $\rho$ is constant %%% 

Now we investigate the value distribution of the ratio of canonical forms 
for weakly complete flat fronts in ${\H}^{3}$. We consider the case where the ratio is constant. 

\begin{proposition}\label{prop4-4}
Let $f\colon \Sigma\to {\H}^{3}$ be a weakly complete flat front. 
If the meromorphic function $\rho$ defined by (\ref{sign-rho}) is constant, 
then $f$ is congruent to a horosphere or a hyperbolic cylinder. Here, a surface equidistance from a geodesic 
is called a hyperbolic cylinder \cite{KUY2}. 
\end{proposition}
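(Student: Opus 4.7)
The plan is to adapt the argument of Proposition \ref{Pro3-1} to the flat-front setting. Since $\rho=\theta/\omega$ is constant, the identity $ds_{1,1}^{2}=(1+|\rho|^{2})|\omega|^{2}$ exhibits $ds_{1,1}^{2}$ as a constant multiple of the flat holomorphic one-form metric $|\omega|^{2}$; weak completeness then makes $ds_{1,1}^{2}$ a complete flat metric on $\Sigma$. Applying Huber's theorem together with the formula from \cite{Fa, Sh} exactly as in the proof of Proposition \ref{Pro3-1} yields $\Sigma=\overline{\Sigma}_{\gamma}\setminus\{p_{1},\ldots,p_{k}\}$ with
\[
\sum_{j=1}^{k}{\ord}_{p_{j}}(ds_{1,1}^{2})=-\chi(\overline{\Sigma}_{\gamma}),
\]
and completeness forces ${\ord}_{p_{j}}(ds_{1,1}^{2})\leq -1$ at each end. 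Hence $\gamma=0$ and only two configurations remain: (a) $k=2$ with each end of order $-1$, so $\Sigma=\C\setminus\{0\}$; or (b) $k=1$ with the unique end of order $-2$, so $\Sigma=\C$.

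In case (b), $\Sigma$ is simply connected, so $\omega$ is a genuine holomorphic one-form on $\C$. The prescribed pole of order two at infinity combined with Liouville's theorem forces $\omega=c\,dz$ for some nonzero constant $c$, and hence $\theta=\rho c\,dz$. Integrating the Legendrian equation (\ref{Legen-form}) with these constant off-diagonal coefficients produces an explicit matrix exponential for $\E_{f}$, and evaluating $f=\E_{f}\E_{f}^{\ast}$ together with $n=\E_{f}e_{3}\E_{f}^{\ast}$ identifies the resulting front as a horosphere. In case (a), the one-forms $\omega$ and $\theta$ live only on the universal cover $\widetilde{\Sigma}=\C_{\zeta}$ with $z=e^{\zeta}$. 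The order $-1$ conditions at both ends together with the requirement that $|\omega|^{2}$ descend to $\C\setminus\{0\}$ pin down the flat metric as $|\omega|^{2}=|c|^{2}|dz/z|^{2}$; a Liouville-type argument in the $\zeta$ coordinate then gives $\omega=c\,dz/z$, whence $\theta=\rho c\,dz/z$. The Legendrian equation reduces to $\E_{f}^{-1}d\E_{f}=B\,dz/z$ with constant matrix $B=\bigl(\begin{smallmatrix}0 & \rho c\\ c & 0\end{smallmatrix}\bigr)$, whose solution $\E_{f}(z)=\E_{f}(1)z^{B}$ is multi-valued on $\C\setminus\{0\}$; demanding that $f$ and $n$ descend to well-defined maps on $\C\setminus\{0\}$ imposes a diagonality/unitarity condition on the monodromy $e^{2\pi iB}$, and the resulting front is a hyperbolic cylinder.

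The main obstacle, which does not appear in the proof of Proposition \ref{Pro3-1}, is that the canonical forms $\omega$ and $\theta$ live only on the universal cover, so the two-end case cannot be eliminated by the single-valuedness argument that ruled out the analogous case for $F$ and $G$ in the improper affine setting. Consequently the analysis of case (a) must be carried out on $\widetilde{\Sigma}$: one must exploit the periodicity dictated by the monodromy to extract $\omega=c\,dz/z$, and then integrate the Legendrian system explicitly and check the resulting monodromy condition to identify the front as a hyperbolic cylinder.
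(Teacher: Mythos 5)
Your first half coincides with the paper's own proof: both arguments use the flatness of $ds^{2}_{1,1}=(1+|\rho|^{2})|\omega|^{2}$, Huber's theorem, the order formula and completeness of the ends to force $\gamma=0$ and $\sum_{j}\ord_{p_{j}}(ds^{2}_{1,1})=-2$, and both split into the cases (a) $k=2$ and (b) $k=1$. From there the routes diverge: the paper disposes of case (a) by citing the asymptotic behaviour of WCF-ends from \cite{GMM} and \cite{KRUY2}, and of case (b) by first showing $\rho\equiv0$ there --- if $\rho$ were a nonzero constant, then $\ord_{p}Q=\mu+\mu_{*}=-4$ by (\ref{ord3}), which is played off against the regularity criterion of Fact \ref{WCF-3} and the identities (\ref{Schwarz-i}) --- so that $Q\equiv0$ and $f$ is a horosphere. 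Your plan of integrating the Legendrian system (\ref{Legen-form}) explicitly is more self-contained, and your treatment of case (a) is sound; note that since $\ord_{0}|\omega|^{2}=\ord_{\infty}|\omega|^{2}=-1$ is an integer, the $U(1)$-monodromy of $\omega$ is trivial, so $\omega$ is already single-valued on $\C\setminus\{0\}$ and equals $c\,dz/z$ exactly, which simplifies your monodromy discussion.

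The genuine error is in case (b). There you obtain $\E_{f}^{-1}d\E_{f}=B\,dz$ with $B=\bigl(\begin{smallmatrix}0&\rho c\\ c&0\end{smallmatrix}\bigr)$, hence $\E_{f}=\E_{f}(0)\exp(Bz)$, and you assert that the resulting front is a horosphere. That is true only when $\rho=0$ (or $\rho=\infty$, after interchanging $\omega$ and $\theta$). If $\rho$ is a nonzero finite constant, then the Hopf differential is $Q=\omega\theta=\rho c^{2}\,dz^{2}\not\equiv0$, whereas a horosphere is totally umbilic with $Q\equiv0$; so the front cannot be a horosphere. Explicitly, $B^{2}=\rho c^{2}\,\mathrm{id}$ and $\exp(Bz)=\cosh(\lambda z)\,\mathrm{id}+{\lambda}^{-1}\sinh(\lambda z)B$ with ${\lambda}^{2}=\rho c^{2}$, which is periodic in $z$: the map $f=\E_{f}\E_{f}^{\ast}$ is the universal cover of a hyperbolic cylinder, not a horosphere. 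The conclusion of Proposition \ref{prop4-4} survives in this sub-case, but your identification is wrong as stated; a correct write-up must either carry the computation far enough to recognize the cylinder when $\rho\neq0$, or follow the paper's route and eliminate $\rho\neq0$ in case (b) before integrating. As written, the sentence ``evaluating $f=\E_{f}\E_{f}^{\ast}$ \dots identifies the resulting front as a horosphere'' is false.
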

\begin{proof}
In general, the function $\rho$ is defined on the universal covering surface $\widetilde{\Sigma}$ of $\Sigma$. 
However, in this case, we can consider that $\rho$ is constant on $\Sigma$. 
Then the metric $ds^{2}_{1,1}$ defined by (\ref{eq-Sasakian}) is represented as 
\begin{equation}\label{Sasaki-rho}
ds^{2}_{1,1}=|\omega|^{2}+|\theta|^{2}=\biggl(1+\biggl|\frac{\theta}{\omega}{\biggr|}^{2}\biggr)|\omega|^{2}
=(1+|\rho|^{2})|\omega|^{2}\,.
\end{equation}
Thus the Gaussian curvature $K_{ds^{2}_{1,1}}$ of $ds^{2}_{1,1}$ vanishes identically on $\Sigma$. 
By Fact \ref{WCF-1}, $\Sigma$ is biholomorphic to a closed Riemann surface of genus $\gamma$ with $k$ points removed, 
that is, $\Sigma={\overline{\Sigma}}_{\gamma}\backslash \{p_{1},\ldots,p_{k}\}$. 
Moreover we obtain the formula (\cite[(3.2)]{KRUY1})
\[
\frac{1}{2\pi}\int_{\Sigma}(-K_{ds^{2}_{1,1}})dA=-\chi(\overline{\Sigma}_{\gamma})-
\sum_{j=1}^{k}{\ord}_{p_{j}}(ds^{2}_{1,1}), 
\] 
where $dA$ denotes the area element of $ds^{2}_{1,1}$ and $\chi(\overline{\Sigma}_{\gamma})$ 
the Euler number of $\overline{\Sigma}_{\gamma}$. 
Since the metric $ds^{2}_{1,1}$ is complete, for each WCF-end $p_{j}$, 
${\ord}_{p_{j}}ds^{2}_{1,1}\leq -1$ holds. Thus, in this case, we get $\gamma=0$ and 
\begin{equation}
\sum_{j=1}^{k}{\ord}_{p_{j}}(ds^{2}_{1,1})=-2\,.
\end{equation}
Since $ds^{2}_{1,1}$ is well-defined on $\Sigma$, we need to consider the following two cases: 
\begin{enumerate}
\item[(a)] The flat front $f$ has two WCF-ends $p$ and $q$, and ${\ord}_{p}{ds^{2}_{1,1}}={\ord}_{q}{ds^{2}_{1,1}}=-1$\, ,
\item[(b)] The flat front $f$ has one WCF-end $p$, and ${\ord}_{p}{ds^{2}_{1,1}}=-2$\,. 
\end{enumerate}
In the case (a), $f$ is congruent to a hyperbolic cylinder. In fact, the WCF-ends are asymptotic 
to a finite cover of a hyperbolic cylinder (\cite{GMM}, \cite{KRUY2}). In the case (b), 
then $\rho\equiv 0$. Because, if not, then it holds that ${\ord}_{p}Q=-4$ by (\ref{ord3}). 
On the other hand, the identities (\ref{Schwarz-i}) 
imply that the WCF-end $p$ is regular. However, by Fact \ref{WCF-3}, it does not occur. 
Hence the Hopf differential $Q=\omega\theta$ also vanishes identically on $\Sigma$, and then $f$ is a horosphere. 
\end{proof}

Applying the same argument as in the proof of Theorem \ref{The3-2} to the ratio $\rho$ of weakly complete 
flat fronts in ${\H}^{3}$,
we give the following result for $\rho$. 

\begin{theorem}\label{Thm4-5}
Let $f\colon \Sigma\to {\H}^{3}$ be a weakly complete flat front and $\rho$ the meromorphic function 
on $\widetilde{\Sigma}$ defined by (\ref{sign-rho}). If $\rho$ is nonconstant, then $\rho$ can omit at most three values. 
\end{theorem}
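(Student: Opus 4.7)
My plan is to mimic the proof of Theorem \ref{The3-2} line by line, exploiting the precise structural parallel between the two settings: the identity $ds^{2}_{1,1} = (1+|\rho|^{2})|\omega|^{2}$ from (\ref{Sasaki-rho}) plays exactly the role that $d\tau^{2} = 2(1+|\nu|^{2})|dG|^{2}$ plays in (\ref{eq2-1-1}), with the correspondence $dG \leftrightarrow \omega$, $dF \leftrightarrow \theta$, $\nu \leftrightarrow \rho$, and $d\tau^{2}/2 \leftrightarrow ds^{2}_{1,1}$. Since the weakly complete property lifts to the universal cover $\widetilde{\Sigma}$, on which $\omega$, $\theta$, and $\rho$ are globally defined, I would work on $\widetilde{\Sigma}$ with the complete metric $ds^{2}_{1,1}$.

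Arguing by contradiction, suppose $\rho$ omits four distinct values $\alpha_{1},\ldots,\alpha_{4}$. If $\widetilde{\Sigma} = \C$, the little Picard theorem already yields a contradiction, so I may assume $\widetilde{\Sigma}$ is biholomorphic to the unit disk, and after a M\"obius transformation that $\alpha_{4}=\infty$. Fixing $\eta \in (0, 1/4)$ and setting $\lambda := 1/(2-4\eta) \in (1/2, 1)$, and writing $\omega = \omega'_{z}\,dz$ in a local coordinate, I would introduce the auxiliary metric
\[
d\sigma^{2} = |\omega'_{z}|^{\frac{2}{1-\lambda}} \left( \frac{1}{|\rho'_{z}|} \prod_{j=1}^{3} \left(\frac{|\rho - \alpha_{j}|}{\sqrt{1+|\alpha_{j}|^{2}}}\right)^{1-\eta} \right)^{\frac{2\lambda}{1-\lambda}} |dz|^{2}
\]
on $\widetilde{\Sigma}' := \{z \in \widetilde{\Sigma} : \rho'_{z}(z) \neq 0\}$, which is flat by construction.

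By Lemma \ref{Lem3-2}, at each $p \in \widetilde{\Sigma}'$ there is a local isometry $\Psi$ from a disk $\Delta_{R}$ with the standard Euclidean metric whose image along a radial ray $L_{a_{0}}$ is a divergent curve $\Gamma$ in $\widetilde{\Sigma}'$, and Lemma \ref{Lem3-1} applied to $\rho \circ \Psi$ yields $R < +\infty$. I would then rule out the possibility that $\Gamma$ accumulates at a point of $\widetilde{\Sigma} \setminus \widetilde{\Sigma}'$ exactly as in Theorem \ref{The3-2}: near such a point, $d\sigma^{2}$ behaves like $|\zeta|^{-2\lambda/(1-\lambda)}|d\zeta|^{2}$ up to a smooth positive factor, and since $\lambda/(1-\lambda) > 1$ any approaching path has infinite $d\sigma$-length, contradicting $R < +\infty$. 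Hence $\Gamma$ diverges outside every compact subset of $\widetilde{\Sigma}$. Pulling back $ds_{1,1}$ via $\Psi$ and applying Lemma \ref{Lem3-1} once more should yield an estimate of the form $\Psi^{\ast}(ds_{1,1}) \leq C'(R/(R^{2}-|z|^{2}))^{\lambda}|dz|$, making the $ds_{1,1}$-length of $\Gamma$ at most $C' R^{1-\lambda}/(1-\lambda) < +\infty$ and contradicting completeness of $ds^{2}_{1,1}$.

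The main obstacle I foresee is bookkeeping rather than any new idea: one must verify that the exponents in the definition of $d\sigma^{2}$ are chosen so that $d\sigma^{2}$ is genuinely flat, that the factor $|\omega|^{2}$ materializes correctly when computing $\Psi^{\ast}(ds_{1,1}) = \sqrt{1+|\rho|^{2}}\,|\omega'_{z}|\,|dz|$, and that Fujimoto's estimate telescopes into the $(R/(R^{2}-|z|^{2}))^{\lambda}$ bound after raising to the appropriate power. Since the passage from $(dG,\nu)$ to $(\omega,\rho)$ is purely formal, these verifications should parallel the computations of Section 3 exactly, and sharpness of the number three should follow from a Voss-type example analogous to Example \ref{Ex-Voss}.
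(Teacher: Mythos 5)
Your proposal is correct and is exactly the argument the paper intends: the paper itself proves Theorem \ref{Thm4-5} only by the remark that one applies the proof of Theorem \ref{The3-2} verbatim under the dictionary $dG\leftrightarrow\omega$, $\nu\leftrightarrow\rho$, $d\tau^{2}\leftrightarrow ds^{2}_{1,1}$ (using (\ref{Sasaki-rho}) in place of (\ref{eq2-1-1})), working on the universal cover where $\rho$ and $\omega$ are globally defined and $ds^{2}_{1,1}$ remains complete. Your bookkeeping of the exponents and of the missing factor $\sqrt{2}$ matches the paper's computation, so there is nothing to add.
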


As a corollary of Theorem \ref{Thm4-5}, 
we can obtain the uniqueness theorem of weakly complete flat surfaces in ${\H}^{3}$.  
Note that Sasaki \cite{Sa}, Volkov and Vladimirova \cite{VV} have 
already obtained the same result for complete flat surfaces in ${\H}^{3}$ (See also \cite[Theorem 3]{GMM}).  

\begin{corollary}\label{Cor4-6}
Any weakly complete flat surface in ${\H}^{3}$ must be congruent to a horosphere or a hyperbolic cylinder. 
\end{corollary}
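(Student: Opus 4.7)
The plan is to follow the same strategy used in the proof of Corollary \ref{Cor2-2}: exploit the fact that a flat surface (as opposed to a flat front) has no singular points, translate this into a statement about omitted values of $\rho$, and then invoke Proposition \ref{prop4-4} and Theorem \ref{Thm4-5}.

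Concretely, let $f\colon \Sigma \to {\H}^{3}$ be a weakly complete flat surface and let $\rho = \theta/\omega$ be the ratio of its canonical forms, defined on the universal cover $\widetilde{\Sigma}$. Since $f$ is an immersion, it has no singular points, so by the characterization recalled after \eqref{sign-rho} we have $|\rho(p)| \neq 1$ for every $p\in \Sigma$. Because $|\rho|\colon \Sigma \to [0,+\infty]$ is continuous and $\Sigma$ (hence $\widetilde{\Sigma}$) is connected, exactly one of the inequalities $|\rho|<1$ or $|\rho|>1$ holds throughout $\widetilde{\Sigma}$. Applying the dual operation $\natural$, which interchanges $\omega$ and $\theta$ and leaves the Sasakian-type metric $ds^{2}_{1,1}=|\omega|^{2}+|\theta|^{2}$ (and hence weak completeness) invariant, we may assume without loss of generality that $|\rho|<1$ on $\widetilde{\Sigma}$.

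This means the meromorphic function $\rho\colon \widetilde{\Sigma}\to \C\cup\{\infty\}$ omits the entire closed region $\{w\in \C\cup\{\infty\}\,;\,|w|\geq 1\}$, an uncountable set of values. Since the pullback of $f$ to $\widetilde{\Sigma}$ is again a weakly complete flat front, Theorem \ref{Thm4-5} applies to this lift and prohibits a nonconstant $\rho$ from omitting more than three values. Hence $\rho$ must be constant. Proposition \ref{prop4-4} then yields the conclusion that $f$ is congruent to a horosphere or a hyperbolic cylinder.

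The main subtlety, rather than an obstacle, is bookkeeping: one must verify that weak completeness passes to the universal cover (this is mentioned explicitly in Section 4) and that the $\natural$ operation, which corresponds to multiplying ${\E}_{f}$ on the right by an off-diagonal unitary matrix, preserves both the first fundamental form $ds^{2}_{1,1}$ and the flat-surface hypothesis. Both points follow directly from the formulas \eqref{Legen-form2} and the definition of $\natural$, so the argument reduces to a short assembly of the preceding results.
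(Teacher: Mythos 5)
Your proposal is correct and follows essentially the same route as the paper: the absence of singular points forces $\rho$ to omit the entire circle $\{|\rho|=1\}$, which by Theorem \ref{Thm4-5} forces $\rho$ to be constant, and Proposition \ref{prop4-4} then gives the conclusion. The additional reduction to $|\rho|<1$ via connectedness and the $\natural$ operation is harmless but unnecessary here (unlike in Corollary \ref{Cor2-2}, where it was needed to deduce weak completeness), since omitting the circle already exceeds three values.
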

\begin{proof}
Because a weakly complete flat surface has no singularities, 
the complement of the image of $\rho$ contains at least the circle 
$\{|\rho|=1\}\subset\C\cup \{\infty\}$. From Proposition \ref{prop4-4} and Theorem \ref{Thm4-5}, 
it is a horosphere or a hyperbolic cylinder. 
\end{proof}

\end{document}